\def\inte#1{
\displaystyle\mathop{#1\kern0pt}^\circ }
\let\grad\nabla
\def\virgp{\raise 2pt\hbox{,}}
\def\cdotpv{\raise 2pt\hbox{;}}
\def\C{\mathop{\bf C\kern 0pt}\nolimits}
\def\DD{\mathop{\bf D\kern 0pt}\nolimits}
\def\K{\mathop{\bf K\kern 0pt}\nolimits}
\def\N{\mathop{\bf N\kern 0pt}\nolimits}
\def\Q{\mathop{\bf Q\kern 0pt}\nolimits}
\def\R{\mathop{\bf R\kern 0pt}\nolimits}
\def\SS{\mathop{\bf S\kern 0pt}\nolimits}
\def\ZZ{\mathop{\bf Z\kern 0pt}\nolimits}
\def\TT{\mathop{\bf T\kern 0pt}\nolimits}
\newcommand{\beq}{\begin{equation}}
\newcommand{\eeq}{\end{equation}}
\newcommand{\ben}{\begin{eqnarray}}
\newcommand{\een}{\end{eqnarray}}
\newcommand{\beno}{\begin{eqnarray*}}
\newcommand{\eeno}{\end{eqnarray*}}
\newtheorem{thm}{Theorem}[section]
\newtheorem{lem}{Lemma}[section]
\newtheorem{rmk}{Remark}[section]
\newtheorem{aspt}{Assumption}[section]
\renewcommand{\theequation}{\thesection.\arabic{equation}}
\begin{document}

\title{Global well-posedness of the 3D Cahn-Hilliard equations  }

\author{
 Zhenbang Li \footnote{Center for Nonlinear Studies, School of Mathematics, Northwest University, Xi'an 710069, China;  School of Science, Xi'an Technological University, Xi'an 710021, China. Email: {\tt lizbmath@nwu.edu.cn}}\and Caifeng Liu \footnote{Center for Nonlinear Studies, School of Mathematics, Northwest University, Xi'an 710069, China. Email: {\tt liucaif@163.com}.}
}

\maketitle

\begin{abstract}
The Cauchy problem of the Cahn-Hilliard equations is studied in three-dimensional space. Firstly, we construct its approximate fourth-order parabolic equation, obtaining the existence of solutions by the Aubin-Lions's compactness lemma. Furthermore, we prove the uniqueness of the solution. Then, the global well-posedness is demonstrated  by using energy estimates. At last, we consider a special case and get a better result about it.
\end{abstract}

\noindent {\sl Keywords:} Cahn-Hilliard equations; Aubin-Lions's compactness lemma; global well-posedness

\renewcommand{\theequation}{\thesection.\arabic{equation}}
\setcounter{equation}{0}

\section{Introduction}

In this paper, we consider the following three-dimensional Cahn-Hilliard equations:
\begin{equation}\label{EB-multi-1.1}
 \begin{cases}
 \partial_{t}u=\Delta\mu,\quad x=(x_1,x_2,x_3)\in\mathbb{R}^3,~t>0,\\
    \mu=-\Delta u+\phi(u),\quad x\in\mathbb{R}^3,~t>0
     \end{cases}
     \end{equation}
      subject to initial value condition
\begin{equation}\label{EB-multi-1.2}
u(x,0)=u_{0}(x), \quad x\in\mathbb{R}^{3}.
     \end{equation}
Here, $u(x_{1}, x_{2}, x_{3}, t)$ is the relative concentration difference of the two phases material in the mixture, $\phi$ is the derivative of the chemical potential $\Phi$. This system describe the phase separation process of binary materials like alloys (see \cite{Cahn1958}). Later, the same mathematical model is also proposed in many studies of diffusion phenomena, such as describing the competition and exclusion of biological populations \cite{Coheh1981}, river bed migration process \cite{Hazewinkel1985}, diffusion of microdroplets on solid surfaces \cite{Tayler1986}, and so on.

We start with a free energy functional of the form, given by Cahn and Hilliard \cite{Cahn1958},
\begin{equation}\label{EB-multi-1.3}
F(t)=F\big(u(x,t)\big)=\int_{\mathbb{R}^{3}}(\Phi(u)+\frac{1}{2}|\nabla u|^{2})dx,
     \end{equation}
where $\Phi(u)=\int^{u}_{0}\phi(s)ds$ is the Helmholtz free energy density. A typical example of potential $\Phi$ is of logarithmic type (see \cite{Cahn1958, Mize2004}). However, this singular potential is very often replaced by a polynomial approximation of the type $\Phi(u)=(u^2-1)^2$, which is called the double-well potential.

Recently, the research on Cahn-Hilliard equation has been very rich. Most of the existing results are carried out in the bounded domain. For example, Charles and Zheng \cite{CME1986} had taken the classical double-well potential $\phi(u)=\gamma_{2}u^{3}+\gamma_{2}u-u$ and considered the global existence or blowing up in a finite time of
the solution to the initial boundary value problem, and then they had found that the sign of $\gamma_{2}$ is crucial. If $\gamma_{2}>0$ , there is a unique global solution for any initial
data $u_{0}\in H^{2}$ and satisfying the natural boundary condition $\frac{\partial u}{\partial x}|_{x=0,L}=0$. If $\gamma_{2}<0$, then the solution must blow up in a finite time for large initial data.
Miranville and Zelik \cite{Mize2004} studied the long time behaviour of the Cahn-Hilliard equations with a singular potential, in which they were able, in two bounded space dimensions, to separate the solutions from the singular values of the potential, and then the global attract result was obtained.  Bates and  Han  \cite{Ba,Ba1} considered the Cahn-Hilliard equations with  a nonlocal potential,  they studied
the existence, uniqueness and  the long-term behavior of the solutions.
In addition, there are many
results related to the Cahn-Hilliard equations on bounded domain(see \cite{AG2015, GG2014, Liu2008, Pr2006, Racke2001} and references therein) .

On the other hand, it is worth noting that there is a few references seemed to have considered the Cahn-Hilliard equations in $\mathbb{R}^{n}$ or in unbounded domains. For example, Caffarelli and Muler researched the Cauchy problem of the Cahn-Hilliard equations under the assumptions that $\phi$ is Lipschitz continuous and equals to a constant outside a bounded interval, an $L^\infty(\mathbb{R}^n)$-a priori estimate is obtained for the solution.
Considering the Cahn-Hilliard equations under the initial data $u_{0}(x)=tanh(\frac{1}{2}x)$ and $\phi(u)=\frac{u}{2}-\frac{u^{3}}{2}$, Bricmont, Kupiainen and Taskinen \cite{J. Bricmont1999} obtained stability and time decay estimates.  Liu,  Wang and Zhao \cite{Liu 2007} studied the Cauchy problem of the Cahn-Hilliard equations with some small initial condition, which the smooth nonlinear function $\phi(u)$ satisfies a certain local growth condition at some fixed point $\bar{u}\in\mathbb{R}$ and that $\|u_{0}-\bar{u}\|_{L^{1}(\mathbb{R}^{n})}$ is suitably small, then they got the global smooth solution. Jan, and Anibal \cite{Cholewa2012} obtained the local well-posedness and global existence in $H^{1}(\mathbb{R}^{n})$, and then they gave the long-time behavior in $\mathbb{R}^{n}$($n\geq3$). Besides, there are some results on the higher order parabolic equations in the unbounded domain (see \cite{DKS2012, CB2012} and references therein).

In this paper, we will consider the global well-posedness of the 3D Cauchy problem of the Cahn--Hilliard equations \eqref{EB-multi-1.1}. Above all, we give some assumptions.
\begin{aspt}\label{aspt-1.1}
Let $\Phi(\cdot)\in \mathcal{C}^3(\mathbb{R})$, and $\phi=\Phi'$. Moreover, for any $s\in\mathbb{R}$, $\phi$ and $\Phi$ satisfy
\begin{equation}
\begin{split}\label{EB-multi-1.4}
&\Phi(s)\geq 0,\quad|\phi(s)|\leqslant C(|s|^{p}+1), \\
|\phi'(s)|\leqslant& C(|s|^{p-1}+1),
    \quad|\phi''(s)|\leqslant C(|s|^{p-2}+1),\quad 2\leq p\leq \frac{21}{5},
\end{split}
     \end{equation}
for some constants $C>0$.
\end{aspt}


Then, we give the main theorem as follows:
\begin{thm}\label{thm-main-1}
Under Assumption \ref{aspt-1.1}, let $u_0\in H^2(\mathbb{R}^3)$, then the Cahn--Hilliard equations \eqref{EB-multi-1.1} has a unique global solution $u$ on $[0, +\infty)$ such that
\begin{equation*}
u\in {\mathcal{C}}([0, \infty);H^2(\mathbb{R}^3))\cap L^2_{loc}([0, \infty);H^{4}(\mathbb{R}^3)).
\end{equation*}
\end{thm}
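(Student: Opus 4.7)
My plan follows the four steps announced in the abstract. I would first regularize the problem---by spectral Galerkin truncation, or equivalently by mollifying the initial datum and adding a small extra fourth-order viscosity---so as to reduce to a smooth equation that is locally solvable in time. On the resulting approximate problem I would derive a priori bounds uniform in the regularization parameter, pass to the limit by the Aubin--Lions compactness lemma, and then obtain uniqueness by a direct energy argument on the difference of two solutions.

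The three decisive a priori estimates are as follows. Testing the equation against $\mu$ gives the Cahn--Hilliard gradient-flow identity
\[
\frac{d}{dt}F(u)=-\|\nabla\mu\|_{L^{2}}^{2},
\]
which together with $\Phi\ge 0$ from Assumption~\ref{aspt-1.1} bounds $\nabla u$ in $L^{\infty}_{t}L^{2}_{x}$ and $\nabla\mu$ in $L^{2}_{t,x}$. Testing next against $u$ itself and applying Cauchy--Schwarz gives
\[
\tfrac{1}{2}\tfrac{d}{dt}\|u\|_{L^{2}}^{2}\le\|\nabla u\|_{L^{2}}\|\nabla\mu\|_{L^{2}},
\]
from which Gronwall produces $u\in L^{\infty}_{\mathrm{loc}}H^{1}$. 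The third and key estimate, multiplying $\partial_{t}u+\Delta^{2}u=\Delta\phi(u)$ by $\Delta^{2}u$, yields
\[
\tfrac{d}{dt}\|\Delta u\|_{L^{2}}^{2}+\|\Delta^{2}u\|_{L^{2}}^{2}\le C\|\Delta\phi(u)\|_{L^{2}}^{2},
\]
with $\Delta\phi(u)=\phi'(u)\Delta u+\phi''(u)|\nabla u|^{2}$. Combining the already-controlled $L^{\infty}_{t}H^{1}_{x}$ bound with Gagliardo--Nirenberg interpolations (such as $\|u\|_{L^{6}}\lesssim\|\nabla u\|_{L^{2}}$ for the scalar powers of $u$ and $\|\nabla u\|_{L^{4}}\lesssim\|\nabla u\|_{L^{2}}^{1/4}\|\Delta u\|_{L^{2}}^{3/4}$ for the gradient factor in the $\phi''$ term) should absorb the right-hand side into $\tfrac{1}{2}\|\Delta^{2}u\|_{L^{2}}^{2}$ plus a Gronwall-integrable remainder; the restriction $p\le 21/5$ in Assumption~\ref{aspt-1.1} is exactly the threshold at which this interpolation bookkeeping closes in three dimensions. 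A final Gronwall then delivers $u\in L^{\infty}_{\mathrm{loc}}H^{2}\cap L^{2}_{\mathrm{loc}}H^{4}$.

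With the approximate solutions $u^{N}$ uniformly bounded in this space and $\partial_{t}u^{N}$ uniformly bounded in, say, $L^{2}_{\mathrm{loc}}H^{-2}$, the Aubin--Lions lemma yields strong convergence in $L^{2}_{\mathrm{loc}}H^{1}_{\mathrm{loc}}$, which under the polynomial growth $|\phi(u)|\le C(|u|^{p}+1)$ is enough to pass to the limit in $\Delta\phi(u^{N})$. Uniqueness I would obtain by writing $w=u_{1}-u_{2}$, testing $\partial_{t}w+\Delta^{2}w=\Delta(\phi(u_{1})-\phi(u_{2}))$ against $w$, and using the 3D embedding $H^{2}(\mathbb{R}^{3})\hookrightarrow L^{\infty}$ to bound $|\phi(u_{1})-\phi(u_{2})|\le C(\|u_{1}\|_{L^{\infty}}+\|u_{2}\|_{L^{\infty}})|w|$; Gronwall then forces $w\equiv 0$. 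The principal obstacle---and the point where the three-dimensional geometry and the exponent $p\le 21/5$ interlock---is the Gagliardo--Nirenberg absorption in the higher-order a priori estimate; once that balance is verified term by term, the remaining ingredients (Aubin--Lions compactness, identification of the nonlinear limit, and the uniqueness Gronwall) are standard.
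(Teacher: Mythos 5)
Your overall strategy is the same as the paper's: Friedrichs/Galerkin regularization, the free-energy dissipation identity, an $L^2$ and then an $H^2$ energy estimate closed by Gagliardo--Nirenberg interpolation, Aubin--Lions compactness to pass to the limit, and a Gronwall argument on the difference of two solutions for uniqueness. The paper in addition splits the approximate solution as $u_n=e^{-t\Delta^2}P_nu_0+\bar u_n$ to get a local existence time uniform in $n$ before running the global estimates, but that is an organizational difference, not a conceptual one.

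There is, however, one link in your chain of a priori estimates that does not close as written. In your second estimate you test against $u$ and immediately apply Cauchy--Schwarz, obtaining only $u\in L^\infty_{\mathrm{loc}}L^2$; you thereby discard the dissipation. The paper instead integrates by parts to get
\begin{equation*}
\tfrac{1}{2}\tfrac{d}{dt}\|u\|_{L^2}^2+\|\Delta u\|_{L^2}^2=\int_{\mathbb{R}^3}\phi(u)\,\Delta u\,dx ,
\end{equation*}
estimates the right-hand side by interpolation using only $\|\nabla u\|_{L^2}\le C(u_0)$, and concludes $\int_0^T\|\Delta u\|_{L^2}^2\,dt\le C(T,u_0)$. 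This time-integrability is exactly what is needed later: after the Gagliardo--Nirenberg absorption in the $H^2$ estimate, the remainder is not ``Gronwall-integrable'' in your sense --- it has the form $C\|\Delta u\|_{L^2}^{2(17-p)/(19-3p)}$, which for $2\le p\le 21/5$ is \emph{superlinear} in $y=\|\Delta u\|_{L^2}^2$ (the constraint $p\le 21/5$ is precisely what keeps it $\le Cy(1+y)$, not what makes it linear). The resulting inequality $y'\le Cy(1+y)+C$ permits finite-time blowup on its own; the paper closes it via the Gronwall variant $y(t)\le (y(0)+Ct)\exp\bigl(C\int_0^t(1+y)\,d\tau\bigr)$, which requires the bound on $\int_0^t\|\Delta u\|^2\,d\tau$ that your version of the $L^2$ estimate does not supply. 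You can repair this either by redoing the $L^2$ estimate as the paper does, or by interpolating further, $\|\Delta u\|_{L^2}\lesssim\|\nabla u\|_{L^2}^{2/3}\|\Delta^2 u\|_{L^2}^{1/3}$, and absorbing the entire remainder into the $\|\Delta^2 u\|_{L^2}^2$ dissipation (this works for $p<5$, so in particular for $p\le 21/5$); but one of these steps must be made explicit. The remaining ingredients of your sketch (compactness, identification of the nonlinear limit under the growth condition, and the uniqueness Gronwall using $H^2\hookrightarrow L^\infty$ and the mean value form of $\phi(u^1)-\phi(u^2)$) match the paper and are sound.
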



\begin{rmk}\label{rmk-1.1}
If taking $\phi(u)=\lambda_{1}u^{3}-\lambda_{2}u$ ($\lambda_{1}, \lambda_{2}>0$), it's easy to get its global well-posedness by using the basic energy estimates and continuity criteria. However, the result of this paper is actually for more general situations, being equivalent to a generalization of it.
\end{rmk}

In this article, we are going to obtain well-posedness of the equations (\ref{EB-multi-1.1}). The main difficulties with
the equations (\ref{EB-multi-1.1}) are that the nonlinear term may possess fractional order term  and
Poincar\'{e} inequality fails on the whole domain. To overcome these difficulties, we will
first consider the local well-posedness of the problems. Based on the properties of the bi-harmonic heat flows and using the method of continuity, we obtain the local well-posedness. After
applying Sobolev embedding theorem and
Gagiardo-Nireberg inequality to establish some
necessary uniform estimates, we prove the global well-posedness of the Cahn-Hilliard equations (\ref{EB-multi-1.1}).

The plan of the paper is as follows: In section 2, we collect some elementary
facts and inequalities which will be used later, such as basic calculus in Sobolev spaces and basic properties
of the bi-harmonic heat flow. In Section 3,  we present basic energy estimates of the equtions \eqref{EB-multi-1.1} with Assumption \ref{aspt-1.1}. Section 4 is devoted to the local well-posedness of the Cahn--Hilliard equations \eqref{EB-multi-1.1}. In Section 5, we give the proof of Theorem \ref{thm-main-1}. Finally, we will consider the special case by giving a polynomial free energy density and get a better result.

Let us complete this section with the notations we are going to use in this context.
\medbreak \noindent{\bf Notations:} We denote by $\|u\|_{L^{q}}$ (or $\|u\|_{H^{q}}$) for the $\|u\|_{L^{q}(\mathbb{R}^{3})}$ (or $\|u\|_{H^{q}(\mathbb{R}^{3})}$) ($q>1$). For $s \in \mathbb{R}$, we denote the pseudo--differential operator $\Lambda^s:=(1-\Delta)^{-\frac{s}{2}}$ with the Fourier symbol $(1+|\xi|^2)^{-\frac{s}{2}}$.The constants $C$ appearing in the paper are all different, taken the appropriate number as required.


\renewcommand{\theequation}{\thesection.\arabic{equation}}
\setcounter{equation}{0} 

\section{Preliminaries}
In this section, we recall some preliminary results that are useful throughout paper.\\
\subsection{Some calculus in Sobolev spaces}





\begin{lem}[Aubin-Lions's lemma, \cite{Simon1990}]\label{lem-2}
Assume $X\subset E\subset Y$ are Banach spaces and $X\hookrightarrow\hookrightarrow E$.
Then the following embeddings are compact:

(i)$\left\{\varphi:\varphi\in L^q([0, T]; X), \displaystyle\frac{\partial \varphi}{\partial t}\in L^1([0, T]; Y)\right\}\hookrightarrow\hookrightarrow
 L^q([0, T]; E)\quad if \quad 1\leq q\leq\infty$;

(ii)$\left\{\varphi:\varphi\in L^\infty([0, T]; X), \displaystyle\frac{\partial \varphi}{\partial t}\in L^r([0, T]; Y)\right\}\hookrightarrow\hookrightarrow
 {\mathcal{C}}([0, T]; E)\quad if \quad 1< r\leq\infty$.
\end{lem}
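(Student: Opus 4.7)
The plan is to deduce both statements from the classical Ehrling interpolation inequality paired with a time-translation estimate. Since $X \hookrightarrow\hookrightarrow E \hookrightarrow Y$, a short contradiction argument (normalize $\|u_n\|_X = 1$, extract an $E$-convergent subsequence by compactness of the first embedding, then note that $\|u_n\|_Y \to 0$ forces the limit to vanish) gives, for every $\eta > 0$, a constant $C_\eta > 0$ such that
\[
\|u\|_E \leq \eta\,\|u\|_X + C_\eta\,\|u\|_Y, \qquad \forall\, u \in X.
\]
Once this is available, compactness in $E$ is reduced to uniform boundedness in $X$ plus smallness in the weaker $Y$-norm; in our setting the latter is produced by integrating $\partial_t \varphi$.

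For part (i), take a sequence $(\varphi_n)$ bounded in $L^q([0,T]; X) \cap W^{1,1}([0,T]; Y)$ and apply the Fréchet–Kolmogorov characterization of precompactness in $L^q([0,T]; E)$. Uniform $L^q$-integrability is automatic from the continuous embedding $X \hookrightarrow E$, and pointwise relative compactness of $\{\varphi_n(t)\}$ in $E$ at Lebesgue points follows from the $X$-bound together with $X \hookrightarrow\hookrightarrow E$. The crucial equicontinuity under time shifts comes from Ehrling,
\[
\|\varphi_n(t+h) - \varphi_n(t)\|_E \leq \eta\,\|\varphi_n(t+h) - \varphi_n(t)\|_X + C_\eta \int_t^{t+h} \|\partial_s \varphi_n(s)\|_Y\, ds,
\]
integrated in $t$: the first term is controlled by $2\eta$ times the uniform $L^q(X)$-bound, while the second tends to $0$ as $h \to 0$ by the uniform $L^1(Y)$-bound on $\partial_t \varphi_n$. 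Letting first $h \to 0$ and then $\eta \to 0$ delivers the required equicontinuity, and Fréchet–Kolmogorov yields a subsequence convergent in $L^q([0,T]; E)$.

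For part (ii), the hypothesis $r > 1$ is essential: Hölder's inequality applied to $\partial_t \varphi_n \in L^r([0,T]; Y)$ yields the uniform modulus
\[
\|\varphi_n(t) - \varphi_n(s)\|_Y \leq \|\partial_t \varphi_n\|_{L^r(Y)}\,|t-s|^{1 - 1/r},
\]
so each $\varphi_n$ has a $Y$-continuous representative taking values in a fixed $X$-ball for all $t \in [0,T]$; combining this Hölder estimate with the Ehrling inequality gives uniform equicontinuity into $E$, while pointwise precompactness at each $t$ is immediate from $X \hookrightarrow\hookrightarrow E$. Arzelà–Ascoli then extracts a subsequence convergent in $\mathcal{C}([0,T]; E)$.

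The main technical obstacle in both parts is keeping the small parameter $\eta$ in Ehrling independent of $n$ and $h$ while the $X$-difference $\|\varphi_n(t+h) - \varphi_n(t)\|_X$ is absorbed by the uniform $L^q([0,T]; X)$ (respectively $L^\infty([0,T]; X)$) bound — standard, but it is the step where the hypotheses are all used together. A secondary subtlety is that for $q = \infty$ in part (i) one must argue at every $t$ rather than almost every $t$; this is handled by passing to the $Y$-continuous representative furnished by $\partial_t \varphi \in L^1(Y)$ and observing that, after modification on a null set, its trajectory stays in a bounded ball of $X$, so pointwise precompactness in $E$ holds on all of $[0,T]$.
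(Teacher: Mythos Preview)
The paper does not supply its own proof of this lemma: it is quoted as a preliminary result with a citation to Simon~\cite{Simon1990}, so there is no in-paper argument to compare against. Your sketch follows the standard route (Ehrling interpolation combined with Fr\'echet--Kolmogorov for (i) and Arzel\`a--Ascoli for (ii)), which is essentially how Simon's proof proceeds.

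One technical point worth tightening: in the $q=\infty$ case of (i) and throughout (ii) you assert that the $Y$-continuous representative ``stays in a fixed $X$-ball for all $t$''. A priori the $L^\infty(0,T;X)$ bound only places $\varphi_n(t)$ in that ball for almost every $t$; without reflexivity of $X$ (which the statement does not assume) one cannot directly conclude that the $Y$-limit along a.e.\ times lands back in $X$. Simon handles this by first proving compactness in $L^q(0,T;E)$ for finite $q$ and then upgrading, or by a time-averaging argument that replaces pointwise values by local means (which do lie in $X$). Your outline is correct in spirit, but this step needs the extra averaging device to be airtight in the non-reflexive generality of the lemma as stated.
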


\begin{lem}[Calculus inequalities, \cite{Klainerman2010}]\label{lem-3}
Let $s>0$. Then the following two estimates are true:\\

(i) $\|uv\|_{H^s(\mathbb{R}^{n})}\leq C\{\|u\|_{L^\infty(\mathbb{R}^{n})}\|v\|_{H^s(\mathbb{R}^{n})}+\|u\|_{H^s(\mathbb{R}^{n})}\|v\|_{L^\infty(\mathbb{R}^{n})}\}$;\\

(ii) $\|uv\|_{H^s(\mathbb{R}^{n})}\leq C\|u\|_{H^s(\mathbb{R}^{n})}\|v\|_{H^s(\mathbb{R}^{n})} \quad \mbox{for all} \quad s>\frac{n}{2}$;\\

(iii) $\|[\Lambda^s, u]v\|_{L^2(\mathbb{R}^{n})} \leq C(\|u\|_{H^s(\mathbb{R}^{n})}\|v\|_{L^\infty(\mathbb{R}^{n})}+\|\grad u\|_{L^\infty(\mathbb{R}^{n})}\|v\|_{H^{s-1}(\mathbb{R}^{n})})$.\\

\noindent where all the constants $C$ are independent of $u$ and $v$.
\end{lem}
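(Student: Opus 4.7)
The plan is to prove all three estimates using Littlewood--Paley theory and Bony's paraproduct calculus, which is the standard toolkit for Moser-type product estimates and Kato--Ponce commutator estimates on $\mathbb{R}^n$. I recall the homogeneous dyadic decomposition $u = \sum_{j\in\mathbb{Z}} \dot\Delta_j u$ together with the low-frequency cutoff $S_j u = \sum_{k\le j-1} \dot\Delta_k u$, and Bony's decomposition
\begin{equation*}
uv = T_u v + T_v u + R(u,v), \qquad T_u v := \sum_j S_{j-1} u\,\dot\Delta_j v,\qquad R(u,v) := \sum_{|j-k|\le 1}\dot\Delta_j u\,\dot\Delta_k v.
\end{equation*}
I will work with the inhomogeneous counterpart to fit the $H^s$ norm given by the symbol $(1+|\xi|^2)^{s/2}$, but the almost-orthogonality and Bernstein estimates are identical.

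For (i), I estimate each of the three pieces separately. The paraproduct $T_u v$ is frequency-localized at scale $2^j$, so by Bernstein's inequality and the square-function characterization of $H^s$,
\begin{equation*}
\|T_u v\|_{H^s}^2 \lesssim \sum_j 2^{2js}\|S_{j-1}u\|_{L^\infty}^2 \|\dot\Delta_j v\|_{L^2}^2 \lesssim \|u\|_{L^\infty}^2 \|v\|_{H^s}^2,
\end{equation*}
and symmetrically $\|T_v u\|_{H^s} \lesssim \|v\|_{L^\infty}\|u\|_{H^s}$. For the remainder, the key observation is that each diagonal block $\dot\Delta_j u\,\dot\Delta_k v$ has frequency support in a ball of radius $\lesssim 2^j$ (not an annulus), so to recover $H^s$ regularity I use that $s>0$ to sum a geometric series after applying Bernstein. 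This gives $\|R(u,v)\|_{H^s}\lesssim \|u\|_{L^\infty}\|v\|_{H^s} + \|v\|_{L^\infty}\|u\|_{H^s}$, completing (i). Estimate (ii) is then immediate from (i) together with the Sobolev embedding $H^s(\mathbb{R}^n)\hookrightarrow L^\infty(\mathbb{R}^n)$ valid for $s>n/2$.

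For (iii), the Kato--Ponce commutator estimate, I expand
\begin{equation*}
[\Lambda^s,u]v = \bigl[\Lambda^s, T_u\bigr]v + \bigl(\Lambda^s T_v u - T_v\Lambda^s u\bigr) + \Lambda^s R(u,v) - R(u,\Lambda^s v),
\end{equation*}
and estimate each term in $L^2$. The genuine commutator $[\Lambda^s,T_u]$ is the core of the argument: by writing it as a bilinear Fourier multiplier and doing a first-order Taylor expansion of the symbol $(1+|\xi+\eta|^2)^{s/2}$ in the high-frequency variable, one sees that this operator effectively behaves like $T_{\nabla u}$ composed with $\Lambda^{s-1}$ on $v$, yielding the bound $\|\nabla u\|_{L^\infty}\|v\|_{H^{s-1}}$. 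The remaining paraproduct and remainder pieces are controlled by $\|u\|_{H^s}\|v\|_{L^\infty}$ using the same square-function arguments as in (i).

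The main obstacle is the commutator piece $[\Lambda^s,T_u]v$ in (iii): one must genuinely use that the commutator with a smooth operator of order $s$ and a low-frequency multiplier gains one derivative, which requires the Taylor expansion of the symbol and a careful bookkeeping of frequency supports. The remainder terms $R(u,v)$ in both (i) and (iii), which are delicate when $s$ is near $0$, are handled precisely because the hypothesis $s>0$ permits the geometric summation needed after Bernstein's inequality. All remaining pieces reduce to direct square-function estimates and do not present additional difficulty.
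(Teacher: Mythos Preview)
The paper does not actually prove this lemma; it is stated as a preliminary fact with a citation to Klainerman--Majda and no argument is given. So there is nothing in the paper to compare your proof against. Your Littlewood--Paley/Bony paraproduct approach is the standard modern route to all three estimates, and parts (i) and (ii) are handled correctly.

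There is, however, an algebraic slip in your decomposition for (iii). Applying Bony's formula to both $uv$ and $u\,\Lambda^s v$ gives
\begin{equation*}
[\Lambda^s,u]v = [\Lambda^s,T_u]v + \bigl(\Lambda^s T_v u - T_{\Lambda^s v}\,u\bigr) + \bigl(\Lambda^s R(u,v) - R(u,\Lambda^s v)\bigr),
\end{equation*}
whereas you wrote the middle term as $\Lambda^s T_v u - T_v\,\Lambda^s u$. The operators $T_{\Lambda^s v}\,u$ and $T_v\,\Lambda^s u$ are genuinely different (the first paramultiplies $u$ by low frequencies of $\Lambda^s v$, the second paramultiplies $\Lambda^s u$ by low frequencies of $v$), so as written your identity is false. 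The fix is easy: both $\Lambda^s T_v u$ and $T_{\Lambda^s v}\,u$ are bounded in $L^2$ by $C\|v\|_{L^\infty}\|u\|_{H^s}$, the latter via the Bernstein estimate $\|S_{j-1}(\Lambda^s v)\|_{L^\infty}\lesssim 2^{js}\|v\|_{L^\infty}$ for $s>0$. With that correction, the rest of your outline for (iii) --- the Taylor expansion of the symbol to handle $[\Lambda^s,T_u]v$ and the remainder bounds --- is the standard Kato--Ponce argument and goes through.
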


\begin{lem}[Sobolev's embedding theorem, \cite{L Tartar2007}]\label{lem-4}
If $u\in H^{s}(\mathbb{R}^{n})$ for $s>\frac{n}{2}$, then $u\in L^{\infty}(\mathbb{R}^{n})$, with the bound
$$ \|u\|_{L^{\infty}(\mathbb{R}^{n})}\leq C \|u\|_{H^{s}(\mathbb{R}^{n})}, $$
the constant C depending only on $s$ and $n$.
\end{lem}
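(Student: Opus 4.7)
The plan is to prove the embedding via the Fourier transform. First, I would reduce to the Schwartz class $\mathcal{S}(\mathbb{R}^n)$, which is dense in $H^s(\mathbb{R}^n)$, and extend the final inequality to general $u$ by a standard limiting argument. For $u\in\mathcal{S}(\mathbb{R}^n)$, Fourier inversion gives
\[
u(x)=(2\pi)^{-n/2}\int_{\mathbb{R}^n}\hat u(\xi)\,e^{ix\cdot\xi}\,d\xi,
\]
so that $\|u\|_{L^\infty(\mathbb{R}^n)}\leq (2\pi)^{-n/2}\|\hat u\|_{L^1(\mathbb{R}^n)}$, and the whole task reduces to controlling $\|\hat u\|_{L^1}$ by $\|u\|_{H^s}$.

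The key step is to insert the weight $(1+|\xi|^2)^{s/2}$ and its reciprocal, then apply the Cauchy--Schwarz inequality:
\[
\|\hat u\|_{L^1(\mathbb{R}^n)}=\int_{\mathbb{R}^n}(1+|\xi|^2)^{-s/2}\cdot (1+|\xi|^2)^{s/2}|\hat u(\xi)|\,d\xi\leq I_s^{1/2}\,\|u\|_{H^s(\mathbb{R}^n)},
\]
where $I_s:=\int_{\mathbb{R}^n}(1+|\xi|^2)^{-s}\,d\xi$ and I use the standard identity $\|u\|_{H^s}^2=\int(1+|\xi|^2)^s|\hat u(\xi)|^2\,d\xi$. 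Switching to polar coordinates, $I_s=\omega_{n-1}\int_0^\infty r^{n-1}(1+r^2)^{-s}\,dr$, and the integrand is $O(r^{n-1-2s})$ as $r\to\infty$, so $I_s<\infty$ precisely when $2s>n$, i.e.\ $s>n/2$. This is exactly the hypothesis of the lemma and produces a constant $C=C(s,n)$ depending only on $s$ and $n$.

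Combining the two inequalities yields $\|u\|_{L^\infty(\mathbb{R}^n)}\leq C\|u\|_{H^s(\mathbb{R}^n)}$ for every Schwartz $u$. To reach an arbitrary $u\in H^s(\mathbb{R}^n)$, I would pick $u_k\in\mathcal{S}(\mathbb{R}^n)$ with $u_k\to u$ in $H^s$; the estimate above shows $(u_k)$ is Cauchy in $L^\infty$, hence converges uniformly to some bounded continuous $\tilde u$, while a subsequence converges to $u$ a.e., forcing $\tilde u=u$ almost everywhere. Passing to the limit preserves the inequality. There is essentially no obstacle in this argument beyond the elementary Fourier-side computation; the sharp threshold $s>n/2$ is forced entirely by the convergence of the weight integral $I_s$.
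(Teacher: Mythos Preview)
Your argument is correct and is the standard Fourier-transform proof of the Sobolev embedding $H^s(\mathbb{R}^n)\hookrightarrow L^\infty(\mathbb{R}^n)$ for $s>n/2$. Note, however, that the paper does not supply its own proof of this lemma: it is quoted as a preliminary result with a reference to Tartar's textbook, so there is no ``paper's proof'' to compare against. Your proof would serve perfectly well as a self-contained justification of the cited statement.
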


\begin{lem}[Gagiardo-Nireberg-Sobolev inequality, \cite{Lawrence1988}]\label{lem-5}
If $1\leq q\leq n$ and $q^{\ast}\equiv \frac{qn}{n-q}$ is the Sobolev conjugate of $q$, then
$$\|f\|_{L^{q^{\ast}}(\mathbb{R}^{n})}\leq C_{q}\|\nabla f\|_{L^{q}(\mathbb{R}^{n})}$$
for all function $f\in C_{0}^{1}(\mathbb{R}^{n})$, the optimal constant $C_{q}$ depending only on $q$ and $n$.
Especially, when $n=3$, $q=2$, we have
$$\|f\|_{L^{6}(\mathbb{R}^{3})}\leq C\|\nabla f\|_{L^{2}(\mathbb{R}^{3})}.$$
\end{lem}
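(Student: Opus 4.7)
The plan is to prove the scale-critical inequality in two stages, following the standard Nirenberg--Gagliardo approach: first establish the $L^1$-gradient case and then bootstrap to general $1\leq q<n$ by applying the $q=1$ estimate to a power of $|f|$. The special case $n=3,q=2$ will then drop out by substituting these parameters.

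\textbf{Stage 1: the case $q=1$.} For $f\in C_0^1(\R^n)$ and every index $i$, the fundamental theorem of calculus yields
\begin{equation*}
|f(x_1,\dots,x_n)|\leq \int_{-\infty}^{+\infty}|\p_i f(x_1,\dots,x_{i-1},t,x_{i+1},\dots,x_n)|\,dt \eqdefa F_i(\hat x_i),
\end{equation*}
where $\hat x_i$ denotes the $(n-1)$-tuple with $x_i$ omitted. Raising to the power $1/(n-1)$ and multiplying the $n$ inequalities gives
\begin{equation*}
|f(x)|^{n/(n-1)}\leq \prod_{i=1}^{n} F_i(\hat x_i)^{1/(n-1)}.
\end{equation*}
I would then integrate in $x_1,\dots,x_n$ successively, applying the generalized H\"older inequality (the Loomis--Whitney step): each $F_i$ depends on all variables except $x_i$, and integrating in $x_1$ first pulls out $F_1$, then using H\"older with $n-1$ factors each raised to $1/(n-1)$ absorbs the remaining terms; iterating the argument leaves the clean bound $\|f\|_{L^{n/(n-1)}}\leq \prod_i\|\p_i f\|_{L^1}^{1/n}\leq \|\nabla f\|_{L^1}$ (up to a dimensional constant).

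\textbf{Stage 2: bootstrap to $1<q<n$.} Set $\gamma\eqdefa \frac{q(n-1)}{n-q}$, so that $\gamma\,\frac{n}{n-1}=q^{\ast}=(\gamma-1)\,q'$ with $q'=q/(q-1)$. The function $g\eqdefa |f|^{\gamma}$ lies in $C_0^1$ with $\nabla g=\gamma|f|^{\gamma-1}\,\mathrm{sgn}(f)\,\nabla f$, so applying Stage 1 to $g$ and using H\"older with exponents $q,q'$ I get
\begin{equation*}
\Bigl(\int|f|^{q^{\ast}}\,dx\Bigr)^{(n-1)/n}\leq C\,\gamma\int|f|^{\gamma-1}|\nabla f|\,dx\leq C\gamma\,\|\nabla f\|_{L^q}\Bigl(\int|f|^{q^{\ast}}\,dx\Bigr)^{1/q'}.
\end{equation*}
A direct arithmetic check confirms $(n-1)/n - 1/q' = 1/q^{\ast}$, so dividing yields the desired estimate $\|f\|_{L^{q^{\ast}}}\leq C_q\|\nabla f\|_{L^q}$, with $C_q$ depending only on $n$ and $q$.

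\textbf{Specialization and main obstacle.} Setting $n=3,q=2$ gives $q^{\ast}=6$, hence the stated inequality $\|f\|_{L^6(\R^3)}\leq C\|\nabla f\|_{L^2(\R^3)}$. The step requiring the most care is the iterated H\"older / Loomis--Whitney integration in Stage 1: the factors $F_i(\hat x_i)$ all depend on overlapping subsets of variables, and one must carefully pair off the $(n-1)$ factors still depending on the integration variable at each stage using generalized H\"older with exponents summing correctly to $1$. Everything else (the identity $\gamma\cdot\frac{n}{n-1}=(\gamma-1)q'$ and the arithmetic rearrangement) is routine once Stage 1 is in hand. The restriction to $f\in C_0^1$ is exactly what makes the manipulations above legitimate; a density argument then extends the inequality to the natural Sobolev completion, but that extension is not required by the statement.
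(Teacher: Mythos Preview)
Your proof is correct and follows the classical Nirenberg argument (fundamental theorem of calculus plus iterated H\"older in Stage~1, then the power substitution $|f|^{\gamma}$ with $\gamma=\frac{q(n-1)}{n-q}$ in Stage~2); the arithmetic identities you quote all check out, and your identification of the Loomis--Whitney step as the delicate point is accurate.

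There is nothing to compare against: the paper does not prove this lemma at all. It is stated in Section~2 as a preliminary tool and simply cited to Evans~\cite{Lawrence1988}, where precisely the argument you sketch appears. So you have supplied strictly more than the paper does. Two small remarks: (i) the hypothesis as printed allows $q=n$, but of course your argument (and the inequality itself in this form) requires $1\leq q<n$; this is a typo in the statement, not a flaw in your proof. (ii) The phrase ``optimal constant'' in the statement is loose---your proof gives \emph{a} constant depending only on $n,q$, not the sharp one (Talenti/Aubin), and nothing in the paper's applications needs sharpness.
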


\begin{lem}[\cite{Wang2013}]\label{lem-6}
Assume $\Omega$ is bounded domain or $\Omega=\mathbb{R}^{n}$, let $1\leq p,r\leq\infty$, $j,k$ are integers, $0\leq j<k$, $\frac{j}{k}\leq \theta\leq1$, and satisfy\\

(1)$\frac{1}{q}-\frac{j}{n}=\theta(\frac{1}{p}-\frac{m}{n})+\frac{1-\theta}{r};$\\

(2)if $n>pk$, $r\leq np/(n-pk)$, and if $n\leq pk$, $r\leq\infty$;\\

(3)if $n>p(k-j)$, $\frac{nr}{n+rj}\leq q\leq\frac{np}{n-p(k-j)}$, and if $n\leq p(k-j)$, $\frac{nr}{n+rj}\leq q\leq\infty$,\\

then there exists a constant $C=C(n,k,p,r,j,\theta,\Omega)$, such that
$$\|D^{j}u\|_{L^{q}(\Omega)}\leq C\|D^{k}u\|_{L^{p}(\Omega)}^{\theta}\|u\|_{L^{r}(\Omega)}^{1-\theta}.$$

\end{lem}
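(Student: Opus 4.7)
The plan is to deduce the inequality from the classical Sobolev embedding by interpolation, following the standard Gagliardo--Nirenberg strategy. I split the argument into three stages after reducing to $\Omega = \mathbb{R}^n$: the base case $j = 0$, $k = 1$; iteration in $k$ with $j = 0$; and induction on $j$. For the base case I combine the Sobolev inequality $\|u\|_{L^{p^*}} \leq C \|\nabla u\|_{L^p}$, valid for $1 \leq p < n$ with $p^* = np/(n - p)$, with the H\"older interpolation
\[
\|u\|_{L^q} \leq \|u\|_{L^{p^*}}^{\theta}\,\|u\|_{L^r}^{1-\theta}, \qquad \frac{1}{q} = \frac{\theta}{p^*} + \frac{1-\theta}{r},
\]
which is exactly condition (1) when $j = 0$, $k = 1$. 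Conditions (2)--(3) translate into the requirement that $p^*$ be well defined, that $\theta \in [0, 1]$, and that $q$ lie in the admissible interpolation interval between $r$ and $p^*$. The endpoint cases $p \geq n$ and $r = \infty$ are then handled by Morrey's embedding or by replacing $p^*$ with a large finite exponent and passing to the limit, which also accounts for why $r \leq \infty$ is permitted in (2).

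For $j = 0$ and general $k$, I would iterate the base inequality $k$ times: control $\|u\|_{L^q}$ by $\|\nabla u\|_{L^{p_1}}$ and $\|u\|_{L^r}$, then control $\|\nabla u\|_{L^{p_1}}$ by $\|\nabla^2 u\|_{L^{p_2}}$ and $\|\nabla u\|_{L^{r_1}}$, absorb the lower-order factor by H\"older against $\|u\|_{L^r}$ and $\|\nabla^2 u\|_{L^{p_2}}$, and continue. Dimensional analysis (each derivative costs one factor of length) shows that the composite scaling weights reproduce $\theta$ and $1 - \theta$ as prescribed by (1). For $j \geq 1$, I would induct on $j$: setting $v = D^j u$, apply the $j = 0$ version to $v$ with order $k - j$, obtaining $\|D^j u\|_{L^q} \leq C \|D^k u\|_{L^p}^{\tilde\theta}\|D^j u\|_{L^{r'}}^{1-\tilde\theta}$; then absorb $\|D^j u\|_{L^{r'}}$ by the inductive hypothesis applied at a strictly smaller $j$, matching the exponents against (1) by direct arithmetic.

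For bounded $\Omega$, I would invoke a Calder\'on--Stein extension operator $E : W^{k, p}(\Omega) \to W^{k, p}(\mathbb{R}^n)$ satisfying $\|Eu\|_{W^{k, p}(\mathbb{R}^n)} \lesssim \|u\|_{W^{k, p}(\Omega)}$ and the analogous $L^r$ bound; applying the whole-space inequality to $Eu$ and restricting delivers the estimate on $\Omega$, with the constant absorbing the operator norm of $E$. The main obstacle throughout is the bookkeeping of endpoint cases: each threshold ($p = n$, $q = \infty$, $r = \infty$, $\theta = j/k$, or $\theta = 1$) requires its own short argument via Morrey embedding, logarithmic Sobolev, or pure $L^\infty$ H\"older interpolation, and conditions (2)--(3) must be verified to propagate through every iteration step so that all intermediate exponents stay admissible.
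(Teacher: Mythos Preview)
The paper does not actually prove this lemma: it is stated in the preliminaries with a citation to \cite{Wang2013} and used as a black box throughout, so there is no ``paper's own proof'' to compare against. Your sketch is the standard Gagliardo--Nirenberg strategy (base case $j=0$, $k=1$ via Sobolev plus H\"older, iteration in $k$, induction on $j$, extension for bounded domains) and is a reasonable outline, though as you note the endpoint bookkeeping is where the real work lies; for the purposes of this paper no proof is required.
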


\begin{lem}[\cite{Bahouri2011}, \cite{Danchin2005}]\label{lem-7}
 Let $I$ be an open interval of $\mathbb{R}$ and $F : I \rightarrow \mathbb{R}$. Let $s > 0$ and
$\sigma>0$ be the smallest integer such that $\sigma > s$. Assume that $F''$ belongs to $W^{\sigma,\infty}(I;\mathbb{R})$. Let $u, v \in H^{s}(\mathbb{R}^{3})\cap L^{\infty}(\mathbb{R}^{3})$ have values in $J\subset I$. There exists a constant $C = C(s,I,J,N)$
such that
$$\|F(u)\|_{H^{s}}\leq C(1+\|u\|_{L^{\infty}})^{\sigma}\|F''\|_{W^{\sigma,\infty}(I)}\|u\|_{H^{s}},\ if \ F(0)=0,$$
and
\begin{equation*}
\begin{split}
\|F&\circ v-F\circ u\|_{H^{s}}\leq C(1+\|u\|_{L^{\infty}}+\|v\|_{L^{\infty}})^{\sigma}\|F''\|_{W^{\sigma,\infty}(I)}\\
&\times(\|u-v\|_{H^{s}}\sup_{\tau\in[0,1]}\|v+\tau(u-v)\|_{L^{\infty}}+\|u-v\|_{L^{\infty}}
\sup_{\tau\in[0,1]}\|v+\tau(u-v)\|_{H^{s}}).
\end{split}
 \end{equation*}
\end{lem}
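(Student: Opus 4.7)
The plan is to proceed along the standard Bahouri--Chemin--Danchin paraproduct approach to Moser-type composition estimates. The starting identity is the dyadic telescoping
\[
F(u) = \sum_{j \geq -1} \bigl( F(S_{j+1} u) - F(S_j u) \bigr),
\]
with the convention $S_{-1} u = 0$, using crucially the hypothesis $F(0) = 0$. By the fundamental theorem of calculus each summand rewrites as $\Delta_j u \cdot m_j(u)$, where
\[
m_j(u) = \int_0^1 F'\bigl(S_j u + \tau \Delta_j u\bigr)\,d\tau.
\]
The first inequality thus reduces to the claim that multiplication by $m_j(u)$ followed by summation over $j$ costs at most a factor $(1+\|u\|_{L^\infty})^\sigma \|F''\|_{W^{\sigma,\infty}}$ in the $H^s$ norm, and my plan is to quantify exactly that step via Bony's paraproduct machinery.

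Concretely, I would apply Bony's decomposition to each $\Delta_j u \cdot m_j(u)$, writing it as $T_{m_j(u)} \Delta_j u + T_{\Delta_j u} m_j(u) + R(\Delta_j u, m_j(u))$, and then use standard paraproduct and remainder bounds in $H^s$. The $W^{\sigma,\infty}$ hypothesis on $F''$ gives, for every multi-index $|\alpha| \leq \sigma$, the pointwise bound
\[
\|\partial^\alpha m_j(u)\|_{L^\infty} \leq C (1+\|u\|_{L^\infty})^{|\alpha|} \|F''\|_{W^{\sigma,\infty}},
\]
via a Fa\`a di Bruno expansion of $F' \circ (S_j u + \tau \Delta_j u)$. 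Because $\sigma > s$, the paraproduct estimates absorb precisely these derivatives; weighting by $2^{js}$ and using the Littlewood--Paley characterization of $H^s$ then closes the dyadic sum. The hypothesis that $u$ takes values in $J \Subset I$ is used here only to ensure that $S_j u + \tau \Delta_j u$ remains in a fixed neighborhood of $J$ inside $I$, so that the $W^{\sigma,\infty}(I)$ norm of $F''$ actually controls the pointwise bounds above.

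For the difference estimate, the starting point is the integral formula
\[
F(v) - F(u) = (v - u) \int_0^1 F'\bigl(u + \tau(v - u)\bigr)\,d\tau.
\]
Expanding via Bony's decomposition again splits the right-hand side into two kinds of pieces: those for which $v - u$ carries the high frequencies (contributing $\|v - u\|_{H^s}$ times an $L^\infty$ bound on the integrand) and those for which $v - u$ carries the low frequencies (contributing $\|v - u\|_{L^\infty}$ times an $H^s$ bound on the integrand). The $H^s$ bound on the integrand follows by applying the first inequality, already established, to the map $\xi \mapsto F'(\xi) - F'(0)$ composed with $u + \tau(v - u)$; interpolating the $L^\infty$ norms of $u$ and $u + \tau(v-u)$ in $\tau$ produces the symmetric factor $(1 + \|u\|_{L^\infty} + \|v\|_{L^\infty})^\sigma$.

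The main obstacle is the bookkeeping needed to guarantee that the dependence on the $L^\infty$ norms is polynomial of degree exactly $\sigma$, and that $\sigma$ must strictly exceed $s$ rather than merely equal it. For integer $s$ a direct Fa\`a di Bruno expansion combined with H\"older estimates would suffice, but for non-integer $s$ one must rely entirely on paraproducts, where the Bony remainder effectively loses one derivative; this loss is what forces the strict inequality $\sigma > s$ and thereby the choice of $\sigma$ as the smallest integer strictly greater than $s$. Once this choice is made, each of the three paraproduct pieces has enough regularity budget to be summed cleanly in $j$, completing both estimates.
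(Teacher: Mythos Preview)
The paper does not prove this lemma at all; it is stated in the preliminaries section as a quoted result from the cited references \cite{Bahouri2011, Danchin2005}, with no argument given. Your sketch is precisely the paraproduct-based Moser-type argument from those references, so in that sense you have reproduced the intended proof rather than diverged from it. One small inaccuracy worth flagging: your explanation of why the strict inequality $\sigma>s$ is needed---that the Bony remainder ``loses one derivative''---is not the actual mechanism, since the remainder $R(f,g)$ in fact gains regularity. The constraint $\sigma>s$ arises instead from the paraproduct piece $T_{\Delta_j u}\,m_j(u)$: to place this in $H^s$ one needs $m_j(u)$ in a Besov/H\"older class of order strictly greater than $s$, which is exactly what the Fa\`a di Bruno bound $\|m_j(u)\|_{C^\sigma_*}\lesssim (1+\|u\|_{L^\infty})^\sigma\|F''\|_{W^{\sigma,\infty}}$ provides once $\sigma>s$. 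A second technical point you glossed over is that $S_j u+\tau\Delta_j u$ need not take values in $J$ even when $u$ does; in the references this is handled either by assuming $I=\mathbb{R}$ or by a cutoff argument, and you should make explicit which route you take. Aside from these two clarifications, the outline is correct and complete.
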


\subsection{Basic properties of the bi-harmonic heat flow}
Let's now recall some fundamental properties of the bi-harmonic heat flow on whole domains.
Consider the solution $u(t,x)$ to the bi-harmonic heat equations:
\begin{equation}\label{EB-multi-2.1}
 \left\{
    \begin{array}{l}
    (\partial_{t}+\Delta^{2})u=0,\ \forall~ (t,x)\in\mathbb{R}^{+}\times\mathbb{R}^{3},\\
    u|_{t=0}=u_{0}(x), \ x\in\mathbb{R}^{3},
   \end{array}
    \right.
    \end{equation}
where initial data $u_{0}\in H^{s}(\mathbb{R}^{3})$ with $s>\frac{3}{2}$. Then we have
$$\hat{u}(t,\xi)=e^{-t|\xi|^{4}}\hat{u}_{0}(\xi),\ \forall \xi=(\xi_{1},\xi_{2},\xi_{3})\in\mathbb{R}^{3},$$
where $\hat{f}(\xi):=(2\pi)^{-\frac{3}{2}}\int_{\mathbb{R}^{3}}f(x)e^{-i\xi\cdot x}dx$ for $\forall f\in L^{1}(\mathbb{R}^{3})$, which implies that
\begin{equation}\label{EB-multi-2.2}
\begin{split}
u(t,x)=e^{-t\Delta^{2}}u_{0}(x)=\sum_{\xi\in\mathbb{R}^{3}}e^{-t|\xi|^{4}}\hat{u}_{0}(\xi)e^{i\xi\cdot x},\ \forall x\in\mathbb{R}^{3},
\end{split}
 \end{equation}
for any $t>0$.
Moreover, we claim that
\begin{equation}\label{EB-multi-2.3}
\begin{split}
\|u(t,\cdot)-u_{0}(\cdot)\|_{L^{\infty}(\mathbb{R}^{3})}\rightarrow 0 \ (as \ t\rightarrow 0).
\end{split}
 \end{equation}
In effect, since $u_{0}\in H^{s}(\mathbb{R}^{3})$ with $s > \frac{3}{2}$, we find $u_{0}(x)=\sum_{\xi\in\mathbb{R}^{3}}\hat{u}_{0}(\xi)e^{i\xi\cdot x}$ for any $x\in\mathbb{R}^{3}$,
which follows from \eqref{EB-multi-2.2} that
$$u(t,x)-u_{0}(x)=\sum_{\xi\in\mathbb{R}^{3}}(e^{-t|\xi|^{4}}-1)\hat{u}_{0}(\xi)e^{i\xi\cdot x},\ \forall x\in\mathbb{R}^{3}.$$
From this, we find that for any $t > 0$, $x\in\mathbb{R}^{3}$,
 \begin{equation}\label{EB-multi-2.4}
\begin{split}
|u(t,x)-u_{0}(x)|\leq \sum_{\xi\in\mathbb{R}^{3}}|e^{-t|\xi|^{4}}-1||\hat{u}_{0}(\xi)|.
\end{split}
 \end{equation}
Since
$$\sum_{\xi\in\mathbb{R}^{3}}|e^{-t|\xi|^{4}}-1||\hat{u}_{0}(\xi)|\leq\sum_{\xi\in\mathbb{R}^{3},|\xi|\geq1}|\hat{u}_{0}(\xi)|\leq C_{s}\|u_{0}\|_{H^{s}},$$
where we have used the fact that $s > \frac{3}{2}$, Lebesgue's dominated convergence theorem ensures that
$$\lim_{t\rightarrow 0^{+}}\sum_{\xi\in\mathbb{R}^{3}}|e^{-t|\xi|^{4}}-1||\hat{u}_{0}(\xi)|=0,$$
which along with \eqref{EB-multi-2.4} yields \eqref{EB-multi-2.3}.
\begin{rmk}\label{rmk-2.1}
According to \eqref{EB-multi-2.3}, we know that, if initial data $u_{0}\in H^{s}(\mathbb{R}^{3})$ with $s > \frac{3}{2}$, then there is a positive time $T_{1}$ such that, for any
$t \in [0,T_{1}]$, there holds that
 \begin{equation}\label{EB-multi-2.5}
\begin{split}
\|e^{-t\Delta^{2}}u_{0}\|_{L^{\infty}(\mathbb{R}^{3})}\leq\|u_{0}\|_{L^{\infty}(\mathbb{R}^{3})}.
\end{split}
 \end{equation}
 Moreover, there exists a constant $M>0$, such that
  \begin{equation}\label{EB-multi-2.6}
\begin{split}
\|u_{0}\|_{L^{\infty}(\mathbb{R}^{3})}\leq M.
\end{split}
 \end{equation}

\end{rmk}

\renewcommand{\theequation}{\thesection.\arabic{equation}}
\setcounter{equation}{0}

\section{Basic energy estimates}
\begin{lem}\label{lem-3-1}
Under the assumptions in Theorem \ref{thm-main-1}, let $u$ be a smooth solution to the equations \eqref{EB-multi-1.1} on $[0,T)$ for $0<T<+\infty$, then there holds
\begin{equation}\label{EB-multi-3.1}
u\in \mathcal{C}([0,T);H^{2}(\mathbb{R}^{3}))\cap L^{2}([0,T);H^{4}(\mathbb{R}^{3})),
     \end{equation}
and moreover,
\begin{equation}\label{EB-multi-3.2}
\|u\|_{L^{\infty}([0,T);H^{2})}^{2}+\|\partial_{t}u\|_{L^{2}([0,T);L^{2})}^{2}+\|u\|_{L^{2}([0,T);H^{4})}^{2}\leq C(T,u_{0}).
     \end{equation}
\end{lem}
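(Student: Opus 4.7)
\textbf{The plan} is to derive the three a priori bounds of \eqref{EB-multi-3.2} by a three-step energy hierarchy (Lyapunov $\to$ $L^2$ $\to$ $H^2/H^4$), and then obtain the time-continuity in \eqref{EB-multi-3.1} by a standard Aubin-Lions argument. The first step exploits the $H^{-1}$-gradient-flow structure of \eqref{EB-multi-1.1}: multiplying $\partial_t u=\Delta\mu$ by $\mu=-\Delta u+\phi(u)$ and integrating by parts gives
\[\frac{d}{dt}\int_{\mathbb{R}^3}\!\bigl(\Phi(u)+\tfrac12|\nabla u|^2\bigr)dx=-\|\nabla\mu\|_{L^2}^2.\]
Since $\Phi\geq0$ and $u_0\in H^2\hookrightarrow L^\infty$ in $\mathbb{R}^3$, the polynomial growth of $\Phi$ makes $F(0)<\infty$; integrating in time yields the uniform controls $\|\nabla u\|_{L^\infty(0,T;L^2)}+\|\nabla\mu\|_{L^2(0,T;L^2)}\leq C$.

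\textbf{$L^2$ estimate.} Next I would test \eqref{EB-multi-1.1} against $u$ itself, which after integration by parts produces $\tfrac12\tfrac{d}{dt}\|u\|_{L^2}^2+\|\Delta u\|_{L^2}^2=-\int\phi'(u)|\nabla u|^2\,dx$. Using $|\phi'(u)|\leq C(|u|^{p-1}+1)$, Hölder's inequality, and the Gagliardo-Nirenberg interpolation of Lemma~\ref{lem-6}, Young's inequality absorbs a fraction of $\|\Delta u\|_{L^2}^2$ on the left and leaves $\tfrac{d}{dt}\|u\|_{L^2}^2+\|\Delta u\|_{L^2}^2\leq C(1+\|u\|_{L^2}^2)$, from which Gronwall's inequality yields $u\in L^\infty(0,T;L^2)\cap L^2(0,T;H^2)$.

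\textbf{$H^2$/$H^4$ estimate --- the main obstacle.} Rewriting the system as the single fourth-order equation $\partial_t u+\Delta^2 u=\Delta\phi(u)$ and testing against $\Delta^2 u$ gives, after one integration by parts and Cauchy-Schwarz,
\[\tfrac12\tfrac{d}{dt}\|\Delta u\|_{L^2}^2+\tfrac12\|\Delta^2 u\|_{L^2}^2\leq\tfrac12\|\Delta\phi(u)\|_{L^2}^2,\]
so the entire problem reduces to controlling $\|\Delta\phi(u)\|_{L^2}^2$ in a form that can be partially absorbed on the left. Expanding $\Delta\phi(u)=\phi'(u)\Delta u+\phi''(u)|\nabla u|^2$ and inserting the polynomial bounds of Assumption~\ref{aspt-1.1}, I would distribute each factor through Hölder's inequality and then interpolate the resulting norms $\|u\|_{L^q}$, $\|\Delta u\|_{L^r}$, $\|\nabla u\|_{L^s}$ between $L^2$ and $H^4$ via Lemma~\ref{lem-6}. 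This is the crux of the proof: the condition $p\leq\tfrac{21}{5}$ is the sharp threshold ensuring that the exponent of $\|u\|_{H^4}$ in the resulting product stays strictly below $2$, so that a final Young inequality produces
\[\tfrac{d}{dt}\|\Delta u\|_{L^2}^2+\|\Delta^2 u\|_{L^2}^2\leq g(t)\bigl(1+\|\Delta u\|_{L^2}^2\bigr),\]
with $g\in L^1(0,T)$ whose integrability follows from the bounds obtained in the first two steps. Gronwall then delivers $\|u\|_{L^\infty(0,T;H^2)}^2+\|u\|_{L^2(0,T;H^4)}^2\leq C(T,u_0)$.

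\textbf{Time derivative and continuity.} From $\partial_t u=-\Delta^2 u+\Delta\phi(u)$, combining the $L^2(0,T;H^4)$ bound just obtained with the pointwise control of $\|\Delta\phi(u)\|_{L^2}$ produced along the way yields $\partial_t u\in L^2(0,T;L^2)$, which is the last bound of \eqref{EB-multi-3.2}. The continuity $u\in\mathcal{C}([0,T);H^2)$ then follows from Lemma~\ref{lem-2}(ii) applied with $X=H^4$, $E=H^2$, $Y=L^2$, completing \eqref{EB-multi-3.1}.
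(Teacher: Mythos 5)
Your strategy coincides with the paper's: the energy identity $\frac{d}{dt}F=-\|\nabla\mu\|_{L^2}^2$ giving $\|\nabla u\|_{L^\infty_tL^2}\leq C(u_0)$, then testing with $u$ to get $\|u\|_{L^\infty_tL^2}^2+\|\Delta u\|_{L^2_tL^2}^2\leq C(T,u_0)$, then testing with $\Delta^2u$ and controlling $\phi'(u)\Delta u+\phi''(u)|\nabla u|^2$ by H\"older, Gagliardo--Nirenberg (Lemma\refer{lem-6}) and Young, and finally reading off $\partial_tu\in L^2_tL^2$ from the equation. Two points deserve correction. First, your explanation of the role of $p\leq\frac{21}{5}$ is not quite right: keeping the exponent of $\|\Delta^2u\|_{L^2}$ strictly below $2$ only requires $p<\frac{19}{3}$ (from the $\phi'$ term) and $p<6$ (from the $\phi''$ term). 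The constraint $p\leq\frac{21}{5}$ enters \emph{after} Young's inequality, in\refeq{EB-multi-3.11}: the leftover factor is $\|\Delta u\|_{L^2}^{\frac{2(17-p)}{19-3p}}$, and one needs $\frac{2(17-p)}{19-3p}\leq 4$, i.e. $p\leq\frac{21}{5}$, so that it can be dominated by $\|\Delta u\|_{L^2}^2\bigl(\|\Delta u\|_{L^2}^2+1\bigr)$ and Gr\"onwall closes using the $L^1_t$-integrability of $\|\Delta u\|_{L^2}^2$ from the previous step — exactly your "$g\in L^1$" requirement, but it constrains the power of $\|\Delta u\|_{L^2}$, not of $\|\Delta^2u\|_{L^2}$.

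Second, your continuity argument via Lemma\refer{lem-2}(ii) with $X=H^4$, $E=H^2$, $Y=L^2$ does not apply as stated: on the unbounded domain $\mathbb{R}^3$ the embedding $H^4(\mathbb{R}^3)\hookrightarrow H^2(\mathbb{R}^3)$ is \emph{not} compact, so the hypothesis $X\hookrightarrow\hookrightarrow E$ of the Aubin--Lions lemma fails. The conclusion $u\in\mathcal{C}([0,T);H^2)$ is still available, but by the standard interpolation (Lions--Magenes) fact that $u\in L^2(0,T;H^4)$ together with $\partial_tu\in L^2(0,T;L^2)$ implies $u\in\mathcal{C}([0,T];[H^4,L^2]_{1/2})=\mathcal{C}([0,T];H^2)$, which requires no compactness; alternatively one can use the smoothness of $u$ assumed in the lemma together with the uniform bounds. (The paper itself asserts\refeq{EB-multi-3.18} without invoking compactness here.) With these two repairs your argument is the paper's argument.
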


\begin{proof}
Derivating of $t$ in $F$ which is mentioned in \eqref{EB-multi-1.3}, we get
\begin{equation*}
\begin{split}
\frac{d F}{dt}&=\int_{\mathbb{R}^{3}}[\phi(u)\partial_{t}u+\nabla u\cdot\nabla\partial_{t}u]dx\\
&=\int_{\mathbb{R}^{3}}[\phi(u)\Delta(-\Delta u+\phi(u))-\Delta u\cdot\Delta(-\Delta u+\phi(u))]dx\\
&=\int_{\mathbb{R}^{3}}[-\nabla\phi(u)(-\nabla\Delta u+\nabla\phi(u))+\nabla\Delta u\cdot(-\nabla\Delta u+\nabla\phi(u))]dx\\
&=-\int_{\mathbb{R}^{3}}[\nabla\phi(u)-\nabla\Delta u]^{2}dx\leq 0.
\end{split}
 \end{equation*}
And owning to Assumption \ref{aspt-1.1}, we have
\begin{equation*}
\begin{split}
F(t)\leq F(0)&=\int_{\mathbb{R}^{3}}[\Phi(u_{0})+\frac{1}{2}(\nabla u_{0})^{2}]dx\\
&=\int_{\mathbb{R}^{3}}\Phi(u_{0})dx+\frac{1}{2}\|\nabla u_{0}\|_{L^{2}}^{2}\leq C(u_{0}),
\end{split}
\end{equation*}
follows that
\begin{equation}
\label{EB-multi-3.3}
\|\nabla u\|_{L^{2}}^{2}+2\int_{\mathbb{R}^{3}}\Phi(u)dx\leq F(0)\leq C(u_{0}).
\end{equation}
Taking $L^2$ inner product with $u$ to the system \eqref{EB-multi-1.1}, it gives us
 \begin{equation}\label{EB-multi-3.4}
\frac{1}{2}\frac{d}{dt}\|u\|_{L^2}^2=-\|\Delta u\|_{L^{2}}^2+\int_{\mathbb{R}^{3}}\phi(u)\Delta udx.
\end{equation}
Using \eqref{EB-multi-1.4} and H\"{o}lder's inequality,  the second term on the right of the equation (\ref{EB-multi-3.4}) follows
 \begin{equation}\label{EB-multi-3.5}
 \begin{split}
\int_{\mathbb{R}^{3}}\phi(u)\Delta udx&=-\int_{\mathbb{R}^{3}}\phi'(u)|\nabla u|^{2}dx\\
&\leq C\int_{\mathbb{R}^{3}}(|u|^{p-1}+1)|\nabla u|^{2}dx\\
&\leq C\|u\|_{L^{6}}^{p-1}\|\nabla u\|_{L^{\frac{12}{7-p}}}^{2}+C\|\nabla u\|_{L^{2}}^{2}.
\end{split}
\end{equation}
In fact, applying Lemma \ref{lem-5} and Lemma \ref{lem-6}, we have
\begin{equation*}
\begin{split}
&\|u\|_{L^{6}}^{p-1}\leq C\|\nabla u\|_{L^{2}}^{p-1},\quad\quad\quad\hbox{and}
\\
&\|\nabla u\|_{L^{\frac{12}{7-p}}}^{2}\leq C[\|\nabla u\|_{L^{2}}^{\frac{5-p}{4}}\|\Delta u\|_{L^{2}}^{\frac{p-1}{4}}]^{2}\leq C\|\nabla u\|_{L^{2}}^{\frac{5-p}{2}}\|\Delta u\|_{L^{2}}^{\frac{p-1}{2}}.
\end{split}
\end{equation*}
Thus,
 \begin{equation}\label{EB-multi-3.6}
 \begin{split}
\int_{\mathbb{R}^{3}}\phi(u)\Delta udx&\leq C\|\nabla u\|_{L^{2}}^{\frac{p+3}{2}}\|\Delta u\|_{L^{2}}^{\frac{p-1}{2}}+C(u_{0})\\
&\leq C(\|\nabla u\|_{L^{2}}^{\frac{p+3}{2}})^{\frac{4}{5-p}}+\frac{1}{2}(\|\Delta u\|_{L^{2}}^{\frac{p-1}{2}})^{\frac{4}{p-1}}+C(u_{0})\\
&\leq C\|\nabla u\|_{L^{2}}^{\frac{2(p+3)}{(5-p)}}+\frac{1}{2}\|\Delta u\|_{L^{2}}^{2}+C(u_{0})\\
&\leq \frac{1}{2}\|\Delta u\|_{L^{2}}^{2}+C(u_{0}).
\end{split}
\end{equation}
Substituting \eqref{EB-multi-3.6} into \eqref{EB-multi-3.4}, we get
 \begin{equation}\label{EB-multi-3.7}
\frac{d}{dt}\|u\|_{L^2}^2+\|\Delta u\|_{L^{2}}^2\leq C(u_{0}).
\end{equation}
By the Gr\"{o}nwall inequality, we obtain
\begin{equation}
\label{EM-multi-3.8}
\|u\|_{L^2}^2+\int_{0}^{t}\|\Delta u\|_{L^{2}}^2 d\tau\leq C(T,u_{0}).
\end{equation}

On the other hand, multiplying the equations \eqref{EB-multi-1.1} by $(\Delta^{2} u)$ and then integrating in $x\in \mathbb{R}^3$, thanks to integration by parts and the Young's inequality, we get
\begin{equation}\label{EB-multi-3.9}
\begin{split}
\frac{1}{2}\frac{d}{dt}\|\Delta u\|^{2}&=\int_{\mathbb{R}^{3}}\Delta(-\Delta u+\phi(u))\Delta^{2}udx\\
&=-\|\Delta^{2} u\|^{2}+\int_{\mathbb{R}^{3}}\phi'(u)\Delta u\cdot\Delta^{2}udx+\int_{\mathbb{R}^{3}}\phi''(u)|\nabla u|^{2}\Delta^{2}udx.
\end{split}
\end{equation}
However, following from \eqref{EB-multi-1.4} and Young's inequality, there hold
\begin{equation}\label{EB-multi-3.10}
\begin{split}
&\int_{\mathbb{R}^{3}}\phi'(u)\Delta u\cdot\Delta^{2}udx\leq C\int_{\mathbb{R}^{3}}|(\phi'(u))\Delta u|^2dx+\frac{1}{7}\|\Delta^{2} u\|_{L^{2}}^{2}\\
\leq &C\int_{\mathbb{R}^{3}}|u|^{2(p-1)}|\Delta u|^{2}dx+C\|\Delta u\|_{L^{2}}^{2}+\frac{1}{7}\|\Delta^{2} u\|_{L^{2}}^{2}\\
\leq &C\big(\int_{\mathbb{R}^{3}}|u|^{2(p-1)\cdot\frac{4}{p-1}}dx\big)^{\frac{p-1}{4}}\big(\int_{\mathbb{R}^{3}}|\Delta u|^{2\cdot\frac{4}{5-p}}dx\big)^{\frac{5-p}{4}}+C\|\Delta u\|_{L^{2}}^{2}+\frac{1}{7}\|\Delta^{2} u\|_{L^{2}}^{2}\\
\leq &C\|u\|_{L^{8}}^{2(p-1)}\|\Delta u\|_{L^{\frac{8}{5-p}}}^{2}+C\|\Delta u\|_{L^{2}}^{2}+\frac{1}{7}\|\Delta^{2} u\|_{L^{2}}^{2}.
\end{split}
\end{equation}
In fact, thanks to the Lemma \ref{lem-6}, there hold
$$\|u\|_{L^{8}}\leq C\|\Delta u\|_{L^{2}}^{\frac{1}{8}}\|u\|_{L^{6}}^{\frac{7}{8}}\leq C\|\Delta u\|_{L^{2}}^{\frac{1}{8}}\|\nabla u\|_{L^{2}}^{\frac{7}{8}},$$
$$\|\Delta u\|_{L^{\frac{8}{5-p}}}\leq C\|\Delta^{2} u\|_{L^{2}}^{\frac{3(p-1)}{16}}\|\Delta u\|_{L^{2}}^{\frac{19-3p}{16}}.$$
Thanks to \eqref{EB-multi-3.3}, we have
\begin{equation}\label{EB-multi-3.11}
\begin{split}
&C\|u\|_{L^{8}}^{2(p-1)}\|\Delta u\|_{L^{\frac{8}{5-p}}}^{2}\\
\leq& C(\|\Delta u\|_{L^{2}}^{\frac{p-1}{4}}\|\nabla u\|_{L^{2}}^{\frac{7(p-1)}{4}})\|\Delta^{2} u\|_{L^{2}}^{\frac{3(p-1)}{8}}\|\Delta u\|_{L^{2}}^{\frac{19-3p}{8}}\\
\leq &C\|\Delta u\|_{L^{2}}^{\frac{17-p}{8}}\|\Delta^{2} u\|_{L^{2}}^{\frac{3(p-1)}{8}}\leq C\|\Delta u\|_{L^{2}}^{\frac{2(17-p)}{19-3p}}+\frac{1}{14}\|\Delta^{2} u\|_{L^{2}}^{2}\\
\leq& C\|\Delta u\|_{L^{2}}^{2}(\|\Delta u\|_{L^{2}}^{2}+1)+\frac{1}{14}\|\Delta^{2} u\|_{L^{2}}^{2}.
\end{split}
\end{equation}
Thus,
\begin{equation}\label{EB-multi-3.12}
\int_{\mathbb{R}^{3}}\phi'(u)\Delta u\cdot\Delta^{2}udx\leq C\|\Delta u\|_{L^{2}}^{2}(\|\Delta u\|_{L^{2}}^{2}+1)+\frac{3}{14}\|\Delta^{2} u\|_{L^{2}}^{2}.
\end{equation}
And following from \eqref{EB-multi-1.4}, Lemma \ref{lem-6} and Young's inequality, we can obtain
\begin{equation}\label{EB-multi-3.13}
\begin{split}
&\int_{\mathbb{R}^{3}}\phi''(u)|\nabla u|^{2}\cdot\Delta^{2}udx\leq C\int_{\mathbb{R}^{3}}|(\phi''(u))|\nabla u|^{2}|^2 dx+\frac{1}{7}\|\Delta^{2} u\|_{L^{2}}^{2}\\
\leq& C\int_{\mathbb{R}^{3}}|u|^{2(p-2)}|\nabla u|^{4}dx+C\|\nabla u\|^{4}_{L^{4}}+\frac{1}{7}\|\Delta^{2} u\|_{L^{2}}^{2}\\
\leq &C\big(\int_{\mathbb{R}^{3}}|u|^{2(p-2)\cdot\frac{4}{p-2}}dx\big)^{\frac{p-2}{4}}
\big(\int_{\mathbb{R}^{3}}|\nabla u|^{4\cdot\frac{4}{6-p}}dx\big)^{\frac{6-p}{4}}+C\|\Delta^{2} u\|_{L^{2}}\|\nabla u\|^{3}_{L^{2}}+\frac{1}{7}\|\Delta^{2} u\|_{L^{2}}^{2}\\
\leq& C\|u\|_{L^{8}}^{2(p-2)}\|\nabla u\|^{4}_{L^{\frac{16}{6-p}}}+\frac{3}{14}\|\Delta^{2} u\|_{L^{2}}^{2}+C\|\nabla u\|^{6}_{L^{2}},
\end{split}
\end{equation}
and
\begin{equation}\label{EB-multi-3.14}
\begin{split}
&C\|u\|_{L^{8}}^{2(p-2)}\|\nabla u\|^{4}_{L^{\frac{16}{6-p}}}
\leq C\big(\|\Delta u\|_{L^{2}}^{\frac{1}{8}}\|u\|_{L^{6}}^{\frac{7}{8}}\big)^{2(p-2)}\big(\|\Delta^{2} u\|_{L^{2}}^{\frac{2+p}{16}}\|\nabla u\|^{\frac{14-p}{16}}_{L^{2}}\big)^{4}\\
\leq& C\|\Delta u\|_{L^{2}}^{\frac{p-2}{4}}\| u\|_{L^{6}}^{\frac{7(p-2)}{4}}\|\Delta^{2} u\|_{L^{2}}^{\frac{2+p}{4}}\|\nabla u\|^{\frac{14-p}{4}}
\leq C\|\Delta u\|_{L^{2}}^{\frac{2(p-2)}{6-p}}+\frac{1}{14}\|\Delta^{2} u\|_{L^{2}}^{2}\\
\leq& C(\|\Delta u\|_{L^{2}}^{2}+1)^2+\frac{1}{14}\|\Delta^{2} u\|_{L^{2}}^{2},
\end{split}
\end{equation}
due to  Lemma \ref{lem-5} .
Therefore,
\begin{equation}\label{EB-multi-3.15}
\begin{split}
\int_{\mathbb{R}^{3}}\phi''(u)|\nabla u|^{2}\cdot\Delta^{2}udx\leq C\|\Delta u\|_{L^{2}}^{2}(\|\Delta u\|_{L^{2}}^{2}+1)+\frac{2}{7}\|\Delta^{2} u\|_{L^{2}}^{2}+C(u_{0}).
\end{split}
\end{equation}
Substituting \eqref{EB-multi-3.12},\eqref{EB-multi-3.15} into \eqref{EB-multi-3.9}, we get
\begin{equation}\label{EB-multi-3.16}
\frac{d}{dt}\|\Delta u\|_{L^{2}}^{2}+\|\Delta^{2} u\|_{L^{2}}^{2}\leq C\|\Delta u\|_{L^{2}}^{2}(1+\|\Delta u\|_{L^{2}}^{2})+C(u_{0}).
\end{equation}
Hence, we get from Gr\"{o}nwall's inequality and \eqref{EB-multi-3.9} that for all $t \in [0, T)$,
\begin{equation}\label{EB-multi-3.17}
\|\Delta u\|_{L^{2}}^{2}\leq (\|\Delta u_{0}\|_{L^{2}}^{2}+\int_{0}^{t}C(u_{0})d\tau)\exp\{C\int_{0}^{t}(1+\|\Delta u\|_{L^{2}}^{2})d\tau\}\leq C(u_{0},T).
\end{equation}
Integrating \eqref{EB-multi-3.16} with respect to the time $t$, we get for $t \in [0, T)$,
\begin{equation}
\label{3-17-1}
\|\Delta u\|_{L^{2}}^{2}+\int_{0}^{t}\|\Delta^{2} u\|_{L^{2}}^{2}d\tau\leq C(u_{0},T).
\end{equation}
It gives us that
\begin{equation}\label{EB-multi-3.18}
\begin{split}
u\in {\mathcal{C}}([0, T); H^2(\mathbb{R}^3))\cap L^{2}([0, T); H^4(\mathbb{R}^3)).
\end{split}
\end{equation}
At last, taking the $L^{2}$ inner product of the equations \eqref{EB-multi-1.1} with $\partial_{t}u$ ensures
\begin{equation}\label{EB-multi-3.19}
\begin{split}
\|\partial_{t}u\|_{L^{2}}^{2}\leq \|\Delta^{2}u\|_{L^{2}}^{2}+\int_{\mathbb{R}^3}|\Delta \phi(u)|^{2}dx+\frac{1}{2}\|\partial_{t}u\|_{L^{2}}^{2}.
\end{split}
\end{equation}
In fact, following the process of the calculuses \eqref{EB-multi-3.10}-\eqref{EB-multi-3.14}, we can easily get
\begin{equation}\label{EB-multi-3.20}
\begin{split}
&\int_{\mathbb{R}^3}|\Delta \phi(u)|^{2}dx\leq C\int_{\mathbb{R}^3}|\Delta u|^{2}|\phi'(u)|^{2}dx+C\int_{\mathbb{R}^3}|\nabla u|^{4}|\phi''(u)|^{2}dx\\
&\leq C\int_{\mathbb{R}^3}|\Delta u|^{2}|u|^{2(p-1)}dx+C\int_{\mathbb{R}^3}|\nabla u|^{4}|u|^{2(p-2)}dx+C\|\Delta u\|_{L^{2}}^{2}+C\|\nabla u\|_{L^{4}}^{4}\\
&\leq C\|\Delta u\|_{L^{2}}^{2}(\|\Delta u\|_{L^{2}}^{2}+1)+C\|\nabla u\|^{6}_{L^{2}}+\frac{1}{4}\|\Delta^{2} u\|_{L^{2}}^{2}+C
\end{split}
\end{equation}
Substituting \eqref{EB-multi-3.20} into \eqref{EB-multi-3.19}, and integrating \eqref{EB-multi-3.19} with respect to the time $t$, we get for $t \in [0, T)$, we obtain from \eqref{EB-multi-3.18},
$$\|\partial_{t}u\|_{L^{2}([0,T);L^{2})}^{2}\leq C(T,u_{0}).$$
which ends the proof of Lemma \ref{lem-3-1}.

\end{proof}

\renewcommand{\theequation}{\thesection.\arabic{equation}}
\setcounter{equation}{0}

\section{Local well-posedness}
We will prove the local well-posedness of the equations \eqref{EB-multi-1.1}.
\begin{thm}\label{thm-4-1}
Under the assumptions in Theorem \ref{thm-main-1}, there exist $T>0$ and a unique solution $u$ on $[0,T]$ of the Cahn-Hilliard equations \eqref{EB-multi-1.1} such that
\begin{equation}\label{EB-multi-4.1}
\begin{split}
u\in {\mathcal{C}}([0, T); H^{2}(\mathbb{R}^3))\cap L^{2}([0, T); H^{4}(\mathbb{R}^3)).
\end{split}
\end{equation}
\end{thm}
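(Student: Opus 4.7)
The plan is to prove local well-posedness by a Banach fixed point argument applied to the Duhamel formulation of the equation. Rewriting the system as $\partial_t u + \Delta^2 u = \Delta\phi(u)$, a solution satisfies
\[
u(t) = e^{-t\Delta^2}u_0 + \int_0^t e^{-(t-s)\Delta^2}\Delta\phi(u(s))\,ds.
\]
I introduce the Banach space $X_T := \mathcal{C}([0,T]; H^2(\mathbb{R}^3)) \cap L^2([0,T]; H^4(\mathbb{R}^3))$ equipped with the norm $\|u\|_{X_T} := \|u\|_{L^\infty([0,T]; H^2)} + \|u\|_{L^2([0,T]; H^4)}$, let $\Psi(u)(t)$ denote the right-hand side above, and seek a fixed point of $\Psi$ in the closed ball $B_R \subset X_T$ of radius $R$ comparable to $\|u_0\|_{H^2}$ for $T$ small enough.

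Two analytic ingredients drive the estimates. First, maximal $L^2$-regularity for the bi-harmonic heat semigroup yields
\[
\|e^{-t\Delta^2}u_0\|_{X_T} \leq C\|u_0\|_{H^2}, \qquad \bigg\|\int_0^t e^{-(t-s)\Delta^2}f(s)\,ds\bigg\|_{X_T} \leq C\|f\|_{L^2([0,T]; L^2)},
\]
both verified on the Fourier side using the decay of $e^{-t|\xi|^4}$ and a low/high frequency splitting. Second, I need a nonlinear bound $\|\Delta\phi(u)\|_{L^2} \leq C(1+\|u\|_{H^2}^p)$; expanding $\Delta\phi(u) = \phi'(u)\Delta u + \phi''(u)|\nabla u|^2$, invoking the pointwise growth in Assumption \ref{aspt-1.1}, and applying the Sobolev embeddings $H^2(\mathbb{R}^3) \hookrightarrow L^\infty \cap W^{1,4}$ delivers the required estimate. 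Alternatively Lemma \ref{lem-7} applied to $\phi(u)-\phi(0)$ with $\sigma=3$ gives $\|\phi(u)\|_{H^2} \leq C_{\|u\|_{L^\infty}}\|u\|_{H^2}$ directly. Integrating in time then produces $\|\Delta\phi(u)\|_{L^2([0,T]; L^2)} \leq C\,T^{1/2}(1+R^p)$, so choosing $T$ small enough forces $\Psi(B_R) \subset B_R$.

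For the contraction step I use the Lipschitz half of Lemma \ref{lem-7} to obtain $\|\phi(u)-\phi(v)\|_{H^2} \leq C_R \|u-v\|_{H^2}$ for $u,v \in B_R$, whence $\|\Psi(u)-\Psi(v)\|_{X_T} \leq C_R T^{1/2}\|u-v\|_{X_T}$; shrinking $T$ once more makes $\Psi$ a strict contraction, and Banach's theorem yields a unique $u \in B_R$ solving the integral equation. Uniqueness in the full class $X_T$ (not merely within the ball $B_R$) is handled separately: taking the $L^2$ inner product of $\partial_t w + \Delta^2 w = \Delta(\phi(u_1)-\phi(u_2))$ with $w = u_1 - u_2$, integrating by parts, estimating the right-hand side by the Lipschitz bound on $\phi$, and applying Gr\"onwall's inequality forces $w \equiv 0$. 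The main obstacle I anticipate is the fractional polynomial growth $2 \leq p \leq 21/5$, which precludes a direct integer-power expansion of $\phi$: Lemma \ref{lem-7} (or a delicate use of Sobolev embeddings together with Assumption \ref{aspt-1.1}) is essential to control $\phi(u)$ in $H^2$, and the interpolation $\|\nabla u\|_{L^4} \leq C\|u\|_{H^2}$ must be exploited so that the $R$-dependence in the nonlinear bound remains polynomial and is absorbed by the small $T^{1/2}$ factor that drives the contraction.
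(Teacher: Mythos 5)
Your fixed-point scheme is a genuinely different route from the paper's. The paper constructs solutions by Friedrichs frequency truncation $P_n$, solves the resulting ODE in the space $L^2_n$ by Cauchy--Lipschitz, splits $u_n=e^{-t\Delta^2}P_nu_0+\bar u_n$ to obtain $H^2$/$H^4$ bounds on a time interval independent of $n$, and passes to the limit with the Aubin--Lions lemma; uniqueness is then a separate $L^2$ energy estimate closed by Gr\"onwall. Your Duhamel/Banach contraction argument delivers existence and uniqueness in one stroke and avoids the compactness machinery entirely; the two analytic inputs you isolate (maximal $L^2$ regularity for $e^{-t\Delta^2}$, and the bound $\|\Delta\phi(u)\|_{L^2}\le C(1+\|u\|_{H^2})^{p}$ obtained from the expansion $\Delta\phi(u)=\phi'(u)\Delta u+\phi''(u)|\nabla u|^2$ together with $H^2(\mathbb{R}^3)\hookrightarrow L^\infty\cap W^{1,4}$) are precisely the estimates the paper reproves by hand inside its energy computations. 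Your final $L^2$--Gr\"onwall uniqueness argument coincides with the paper's Step 5.

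One step does not go through as written. Under Assumption \ref{aspt-1.1} one only has $\Phi\in\mathcal{C}^3$, hence $\phi\in\mathcal{C}^2$, so Lemma \ref{lem-7} with $s=2$ (which requires $F''\in W^{\sigma,\infty}$ with $\sigma=3$, i.e.\ roughly $\phi\in W^{5,\infty}_{loc}$) is not applicable to $\phi$; and the direct expansion of $\Delta\bigl(\phi(u)-\phi(v)\bigr)$ produces the term $\bigl(\phi''(u)-\phi''(v)\bigr)|\nabla v|^2$, which cannot be bounded by $\|u-v\|$ in any norm because $\phi''$ is merely continuous, not Lipschitz. Hence the claimed contraction bound $\|\Psi(u)-\Psi(v)\|_{X_T}\le C_R T^{1/2}\|u-v\|_{X_T}$ in the full $X_T$ norm is not justified. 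The standard repair is to keep the closed ball $B_R\subset X_T$ for the self-map but run the contraction in the weaker norm $\|\cdot\|_{L^\infty([0,T];L^2)}+\|\cdot\|_{L^2([0,T];H^2)}$, where the difference estimate only needs $\phi(u)-\phi(v)=(u-v)\int_0^1\phi'(\tau u+(1-\tau)v)\,d\tau$ and the local boundedness of $\phi'$ --- exactly the computation in your separate uniqueness paragraph; since $B_R$ is closed under this weaker convergence, Banach's theorem still applies. With that adjustment the argument is complete.
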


\begin{proof}
We are going to use the energy method to prove it in several steps.\\
{\bf Step 1: Construction of an approximate solution sequence.}

We shall first use the classical Friedrich¡¯s regularization method to construct the approximate solutions to \eqref{EB-multi-1.1}. In order to do so, let us define the sequence of frequency cut-off operators $(P_{n})_{n\in\mathbb{N}}$ by
$$P_{n}a\triangleq\mathcal{F}^{-1}(\mathbf{1}_{B(0,n)}\hat{a}).$$
and we define $u_{n}$ via
\begin{equation}\label{EB-multi-4.2}
 \left\{
    \begin{array}{l}
    \partial_{t}u_{n}=P_{n}\Delta\mu_{n},\quad \mu_{n}=P_{n}\phi(P_{n}u_{n})-\Delta P_{n}u_{n},\\
    u_{n}|_{t=0}=P_{n}u_{0}(x).
   \end{array}
    \right.
    \end{equation}
where $\mathbf{1}_{B(0,n)}$ is a characteristic function on the ball $B(0,n)$ centered at the origin with
radius $n$ with $n\in\mathbb{N}$.

Without loss of generality, we restrict $n>n_{0}$ in what follows, where we choose the integer $n_{0}$ so large that
$$(\sum_{|m|>n_{0}}|m|^{-2s})^{\frac{1}{2}}(\sum_{|m|>n_{0}}|m|^{2s}|\hat{u}_{0}|^{2})^{\frac{1}{2}}\leq c_{0},$$
which implies that for any $n>n_{0}$,
\begin{equation}\label{EB-multi-4.3}
\begin{split}
\|P_{n}u_{0}-u_{0}\|_{L^{\infty}}\leq(\sum_{|m|>n_{0}}|m|^{-2s})^{\frac{1}{2}}
(\sum_{|m|>n_{0}}|m|^{2s}|\hat{u}_{0}|^{2})^{\frac{1}{2}}\leq c_{0}.
\end{split}
\end{equation}

Because of properties of $L^{2}$ and $L^{1}$ functions, the Fourier transform of which are supported
in the ball $B(0,n)$, the system \eqref{EB-multi-4.2} appears to be an ordinary differential equation in the
space
$$L_{n}^{2}\triangleq\{a\in L^{2}(\mathbb{R}^{3}): Supp \ \hat{a}\subset B(0,n)\}.$$

Then the Cauchy-Lipschitz theorem allow us to deduce the existence of a local unique solution $u_{n}\in\mathcal{C}([0,T_{n}];L^{2}(\mathbb{R}^{3}))$ for the system \eqref{EB-multi-4.2}. Note that $P_{n}u_{n}$  is also a solution of \eqref{EB-multi-4.2}.  Thus the uniqueness of the solution implies that $P_{n}u_{n}=u_{n}$ and the solution $u_{n}$ is smooth.  Therefore, the approximate system \eqref{EB-multi-4.2} can be rewritten as
\begin{equation}\label{EB-multi-4.4}
 \left\{
    \begin{array}{l}
    \partial_{t}u_{n}=\Delta\mu_{n}, \mu_{n}=P_{n}\varphi(u_{n})-\Delta u_{n}\\
    u_{n}|_{t=0}=P_{n}u_{0}(x)
   \end{array}
    \right.
    \end{equation}
{\bf Step 2: Uniform estimates to the approximate solution}

Denote $T_{n}^{\ast}$ by the maximal existence time of the solution $u_{n}$, then, we first repeat the
argument in the proof of Lemma \ref{lem-3-1} to find
\begin{equation}\label{EB-multi-4.5}
\begin{split}
\|u_{n}\|_{L^{\infty}([0,T_{n}^{\ast})\times\mathbb{R}^{3})}\leq\sup_{\tau\in[0,T_{n}^{\ast})}\|u_{n}\|_{H^{2}(\mathbb{R}^{3})}
d\tau\leq C,
\end{split}
\end{equation}
where $C$ only depends on $P_{n}u_{0}$ and $T_{n}^{\ast}$.

Our goal in this step is to prove that there exists a positive time $T$ ($0<T<inf_{n\in\mathbb{N}}T_{n}^{\ast}$) such that
\begin{equation}\label{EB-multi-4.6}
\begin{split}
\|u_{n}\|_{L^{\infty}([0,T]\times\mathbb{R}^{3})}\leq C(T,u_{0}),
\end{split}
\end{equation}
and $u_{n}$  is uniformly bounded in the space
$$\mathcal{C}([0,T];H^{2}(\mathbb{R}^{3}))\cap L^{2}([0,T];H^{4}(\mathbb{R}^{3})).$$

The inequality \eqref{EB-multi-4.6} guarantees that the equations \eqref{EB-multi-4.2} is a regular problem, which plays a key role in what follows.

To obtain \eqref{EB-multi-4.6}, we consider \eqref{EB-multi-4.2} as a perturbation of its corresponding linear equations. For this, let's first define $u_{n}\triangleq u_{n}^{L}+\bar{u}_{n}$, where $u_{n}^{L}\triangleq e^{-t\Delta^{2}}P_{n}u_{0}$. Then we may rewrite \eqref{EB-multi-4.2} as the following $\bar{u}_{n}$ equations
\begin{equation}\label{EB-multi-4.7}
 \left\{
    \begin{array}{l}
    \partial_{t}\bar{u}_{n}+\Delta^{2}\bar{u}_{n}=f_{n}(u_{n}^{L},\bar{u}_{n}),\\
    \bar{u}_{n}|_{t=0}=0
   \end{array}
    \right.
    \end{equation}
with
\begin{equation}\label{EB-multi-4.8}
\begin{split}
f_{n}(u_{n}^{L},\bar{u}_{n})=\Delta P_{n}\phi(u_{n}^{L}+\bar{u}_{n}).
\end{split}
\end{equation}
From Remark \ref{rmk-2.1} and \eqref{EB-multi-4.3},  one can get, there is a positive time $T_{1}$ (independent of $n$) such
that
\begin{equation}\label{EB-multi-4.9}
\begin{split}
&\|u_{n}^{L}(t)\|_{L^{\infty}([0,T_{1}]\times\mathbb{R}^{3})}\leq \|u_{n}^{L}(t)-P_{n}u_{0}\|_{L^{\infty}([0,T_{1}]\times\mathbb{R}^{3})}+\|P_{n}u_{0}\|_{L^{\infty}(\mathbb{R}^{3})}\\
&\leq\|u_{n}^{L}(t)-P_{n}u_{0}\|_{L^{\infty}([0,T_{1}]\times\mathbb{R}^{3})}+\|P_{n}u_{0}-u_{0}\|_{L^{\infty}(\mathbb{R}^{3})}
+\|u_{0}\|_{L^{\infty}(\mathbb{R}^{3})}\leq M+2c_{0}
\end{split}
\end{equation}
for $\forall t\in[0,T_{1}]$. Moreover, it is easy to find
\begin{equation}\label{EB-multi-4.10}
\begin{split}
&\|u_{n}^{L}(t)\|_{L^{\infty}([0,+\infty);H^{s}(\mathbb{R}^{3}))}
+\|\Delta u_{n}^{L}(t)\|_{L^{2}([0,+\infty);H^{s}(\mathbb{R}^{3}))}\leq C\|u_{0}\|_{H^{s}(\mathbb{R}^{3})}.
\end{split}
\end{equation}
we again repeat the argument in the proof of Lemma \ref{lem-3-1} to find
\begin{equation}\label{EB-multi-4.11}
\begin{split}
\|\nabla \bar{u}_{n}\|_{L^{2}}\leq C(u_{0}),
\end{split}
\end{equation}
\begin{equation}\label{EB-multi-4.12}
\begin{split}
\frac{d}{dt}\|\bar{u}_{n}\|_{L^2}^2+2\|\Delta \bar{u}_{n}\|_{L^{2}}^2=\int_{\mathbb{R}^{3}}\phi(\bar{u}_{n}+u^{L}_{n})\Delta\bar{u}_{n}dx.
\end{split}
\end{equation}
\begin{equation}\label{EB-multi-4.13}
\begin{split}
\frac{d}{dt}\|\Delta\bar{u}_{n}\|_{L^2}^2+\|\Delta^{2} \bar{u}_{n}\|_{L^{2}}^2=\int_{\mathbb{R}^{3}}|\Delta\phi(\bar{u}_{n}+u^{L}_{n})|^{2}dx.
\end{split}
\end{equation}
And then, by the estimates (\ref{EB-multi-4.9}) and (\ref{EB-multi-4.10}),
there has
\begin{equation*}
\begin{split}
&\int_{\mathbb{R}^{3}}\phi(\bar{u}_{n}+u^{L}_{n})\Delta\bar{u}_{n}dx =-\int_{\mathbb{R}^{3}}\phi'(\bar{u}_{n}+u^{L}_{n})(|\nabla\bar{u}_{n}|^2+\nabla\bar{u}_{n}\cdot\nabla u^{L}_{n}) dx
\\
\leq &C\int_{\mathbb{R}^{3}}|\bar{u}_{n}|^{p-1}|\nabla\bar{u}_{n}|^{2}dx+
C\int_{\mathbb{R}^{3}}|\bar{u}_{n}|^{p-1}|\nabla\bar{u}_{n}\nabla u^{L}_{n}|dx\\
&+C(\|u^{L}_{n}\|_{L^{\infty}}^{p-1}+1)(\|\nabla\bar{u}_{n}\|_{L^{2}}^{2}+\|\nabla\bar{u}_{n}\|_{L^{2}}\|\nabla u^{L}_{n}\|_{L^{2}})\\
\leq &C\|\bar{u}_{n}\|_{L^{6}}^{p-1}\|\nabla\bar{u}_{n}\|_{L^{\frac{12}{7-p}}}^{2}+C(\|\bar{u}_{n}\|_{L^{6}}^{2(p-1)}
\|\nabla u^{L}_{n}\|_{L^{\frac{6}{4-p}}}^{2}+\|\nabla\bar{u}_{n}\|_{L^{2}}^{2})+C(u_{0})\\
\leq &C\|\nabla\bar{u}_{n}\|_{L^{2}}^{p-1}(\|\nabla\bar{u}_{n}\|_{L^{2}}^{\frac{5-p}{2}}
\|\Delta\bar{u}_{n}\|_{L^{2}}^{\frac{p-1}{2}})+C\|\nabla\bar{u}_{n}\|_{L^{2}}^{2(p-1)}(\|\Delta^{2} u^{L}_{n}\|_{L^{2}}^{\frac{p-1}{3}}\|\nabla u^{L}_{n}\|_{L^{2}}^{\frac{7-p}{3}}+1)+C(u_{0})\\
\leq& C(u_{0})+\frac{1}{4}\|\Delta \bar{u}_{n}\|_{L^{2}}^{2}.
\end{split}
\end{equation*}
Similarly, we could get
$$\int_{\mathbb{R}^{3}}|\Delta\phi(\bar{u}_{n}+u^{L}_{n})|^{2}dx\leq C(u_{0})+\frac{1}{4}\|\Delta^{2} \bar{u}_{n}\|_{L^{2}}^{2}.$$

Hence, we may claim that there is a positive time $T=T(u_{0})$ ($\leq$ min $\{T_{1}, T_{n}^{\ast}\}$) (with $T_{1}$ in Remark \ref{rmk-2.1}) independent of $n$, such that for all $n$,
\begin{equation}\label{EB-multi-4.14}
\begin{split}
\sup_{t\in[0,T]}\|\bar{u}_{n}\|_{H^{2}}^{2}\leq C(T;u_{0}),
\end{split}
\end{equation}
and thanks to Lemma \ref{lem-4}, there holds
\begin{equation}\label{EB-multi-4.15}
\begin{split}
\sup_{t\in[0,T]}\|\bar{u}_{n}\|_{L^{\infty}}\leq \sup_{t\in[0,T]}\|\bar{u}_{n}\|_{H^{2}}^{2}\leq C(T;u_{0}).
\end{split}
\end{equation}
Then, combining \eqref{EB-multi-4.15} with \eqref{EB-multi-4.9} yields \eqref{EB-multi-4.6}.
Integrating \eqref{EB-multi-4.11}-\eqref{EB-multi-4.13} with respect to the time $t$, we get for $\forall t \in [0,T]$,
$$\|\bar{u}_{n}\|_{H^{2}}^{2}+\int_{0}^{t}\|\Delta\bar{u}_{n}\|_{H^{2}}^{2}d\tau
\leq C(T;u_{0}).$$
Therefore,
\begin{equation}\label{EB-multi-4.16}
\begin{split}
\bar{u}_{n}\in\mathcal{C}([0,T];H^{2}(\mathbb{R}^{3}))\cap L^{2}([0,T];H^{4}(\mathbb{R}^{3})).
\end{split}
\end{equation}
Following from \eqref{EB-multi-4.10} and \eqref{EB-multi-4.16}, we have
\begin{equation}\label{EB-multi-4.17}
\begin{split}
u_{n}\in\mathcal{C}([0,T];H^{2}(\mathbb{R}^{3}))\cap L^{2}([0,T];H^{4}(\mathbb{R}^{3})).
\end{split}
\end{equation}

Moreover, we take the $L^{2}$ inner product of the equations \eqref{EB-multi-4.4} with $\partial_{t}u_{n}$ giving rise to
\begin{equation*}
\begin{split}
\frac{1}{2}\|\partial_{t}u_{n}\|_{L^{2}}^{2}&\leq \|\Delta\phi(u_{n})\|_{L^{2}}^{2}+\|\Delta^{2}u_{n}\|_{L^{2}}^{2}\\
&\leq C(\|\Delta u_{n}\|_{L^{2}}^{4}+\|\Delta u_{n}\|_{L^{2}}^{2}+\|\nabla u_{n}\|^{6}_{L^{2}}+\|\Delta^{2} u_{n}\|_{L^{2}}^{2}+1).
\end{split}
\end{equation*}
Then integrating with respect to the time $t$, we get
$$\|\partial_{t}u_{n}\|_{L^{2}([0,T];L^{2})}^{2}\leq C(T,u_{0}).$$
Therefore, we obtain
 \begin{equation}\label{EB-multi-4.18}
\{u_{n}\}_{n\in\mathbb{N}}\quad \mbox{ is uniformly bounded in }\quad {\mathcal{C}}([0, T]; H^2(\mathbb{R}^3)),
\end{equation}
 \begin{equation}\label{EB-multi-4.19}
 \{\Delta u_{n}\}_{n\in\mathbb{N}} \quad \mbox{ is uniformly bounded in }\quad L^2([0, T]; H^2(\mathbb{R}^3)),
 \end{equation}
\begin{equation}\label{EB-multi-4.20}
\{\partial_{t}u_{n}\}_{n\in\mathbb{N}} \quad \mbox{ is uniformly bounded in}\quad
L^2([0, T]; L^{2}(\mathbb{R}^3)).
\end{equation}

{\bf Step 3: Convergence}

Combining with the Aubin-Lions's compactness lemma and \eqref{EB-multi-4.18}, \eqref{EB-multi-4.19}, \eqref{EB-multi-4.20},  there exists a subsequence of $\{u_{n}\}_{n\in\mathbb{N}}$ (still denoted by $\{u_{n}\}_{n\in\mathbb{N}}$), which converges to some function $u\in\mathcal{C}([0,T];H^{2}(\mathbb{R}^{3}))\cap L^{2}([0,T];H^{4}(\mathbb{R}^{3}))$ such that
$$u_{n}\rightarrow u \quad \mbox{in} \quad L^2([0, T]; H^{4}(\mathbb{R}^3)).$$

Then passing to limit in \eqref{EB-multi-4.4}, it is easy to see that $u$ satisfies \eqref{EB-multi-1.1} in the weak sense. Moreover, there holds
\begin{equation}\label{EB-multi-4.21}
\begin{split}
\|u\|_{L^{\infty}([0,T];H^{2}(\mathbb{R}^{3}))}+\|\Delta u\|_{L^{2}([0,T];H^{2}(\mathbb{R}^{3}))}
+\|\partial_{t}u\|_{L^{2}([0,T];L^{2}(\mathbb{R}^{3}))}\leq C.
\end{split}
\end{equation}

{\bf Step 4:  Continuity in time of the solution}

Let's now prove the continuity in time of the solution.
For any $t \in [0,T]$ and $h$ such that $t + h \in [0,T]$, we deduce from \eqref{EB-multi-4.21} that
\begin{equation}\label{EB-multi-4.22}
\begin{split}
\|u(t+h)-u(t)\|_{L^{2}}^{2}\leq|h|^{2}\|\partial_{t}u\|_{L^{2}}^{2}+\frac{\varepsilon}{2}<\varepsilon
\end{split}
\end{equation}
for $|h|$ small enough. Hence, $u(t)$ is continuous in $L^{2}(\mathbb{R}^{3})$ for any time $t \in [0,T]$.

{\bf Step 5: Uniqueness of the solution}

First, let $u^1$ and $u^2$ be two solutions of \eqref{EB-multi-1.1} with the same initial data and satisfy \eqref{EB-multi-4.21}. We
denote $u^{1, 2}:=u^1-u^2$. Then $u^{1, 2}$ satisfies
\begin{equation*}
\left\{
    \begin{array}{l}
        \partial_t u^{1, 2}=\Delta[\phi(u^{1})-\phi(u^{2})]-\Delta^{2}u^{1, 2}\\
        u^{1, 2}|_{t=0}=0.
 \end{array}
    \right.
    \end{equation*}
Taking $L^2(\mathbb{R}^3)$ energy estimate, we have
\begin{equation}\label{EB-multi-4.23}
\begin{split}
 \frac{1}{2}\frac{d}{dt}\|u^{1, 2}\|_{L^2}^2&=\int_{\mathbb{R}^{3}}\Delta[\phi(u^{1})]u^{1, 2}dx-\int_{\mathbb{R}^{3}}\Delta[\phi(u^{2})]u^{1, 2}dx-\|\Delta u^{1, 2}\|_{L^{2}}^{2}\\
 &=\int_{\mathbb{R}^{3}}(\phi(u^{1})-\phi(u^{2}))\Delta u^{1, 2}dx-\|\Delta u^{1, 2}\|_{L^{2}}^{2}.
 \end{split}
\end{equation}
In fact, there exists a constant $\lambda$ ($0<\lambda<1$), such that
$$\phi(u^{1})-\phi(u^{2})=(u^{1}-u^{2})\phi'(\lambda u^{1}+(1-\lambda)u^{2}).$$
Then, we can obtain
\begin{equation}\label{EB-multi-4.24}
\begin{split}
\int_{\mathbb{R}^{3}}(\phi(u^{1})-\phi(u^{2}))\Delta u^{1, 2}dx&=\int_{\mathbb{R}^{3}}u^{1,2}\phi'(\lambda u^{1}+(1-\lambda)u^{2})\Delta u^{1, 2}dx\\
&\leq C\int_{\mathbb{R}^{3}}|u^{1, 2}||\Delta u^{1, 2}|(1+|\lambda u^{1}+(1-\lambda)u^{2}|^{p-1})dx\\
&\leq C\|u^{1, 2}\|_{L^{2}}\|\Delta u^{1, 2}\|_{L^{2}}(1+\|\lambda u^{1}+(1-\lambda)u^{2}\|_{L^{\infty}}^{p-1}).
 \end{split}
\end{equation}
 Owning to the Lemma \ref{lem-4} and \eqref{EB-multi-4.21}, we have
 $$\|\lambda u^{1}+(1-\lambda)u^{2}\|_{L^{\infty}}^{p-1}\leq C\|\lambda u^{1}\|_{L^{\infty}}^{p-1}+C\|(1-\lambda) u^{2}\|_{L^{\infty}}^{p-1}\leq C.$$
Thus,
\begin{equation}\label{EB-multi-4.25}
\begin{split}
\int_{\mathbb{R}^{3}}(\phi(u^{1})-\phi(u^{2}))\Delta u^{1, 2}dx&\leq C\|u^{1, 2}\|_{L^{2}}\|\Delta u^{1, 2}\|_{L^{2}}\\
&\leq C\|u^{1, 2}\|_{L^{2}}^{2}+\frac{1}{2}\|\Delta u^{1, 2}\|_{L^{2}}^{2}.
 \end{split}
\end{equation}
Substituting \eqref{EB-multi-4.25} into \eqref{EB-multi-4.23}, we obtain
\begin{equation*}
\frac{d}{dt}\|u^{1, 2}\|_{L^2}^2+\|\Delta u^{1,2}\|_{L^{2}}^{2}\leq C\|u^{1,2}\|_{L^{2}}^{2}.
\end{equation*}
Hence, it follows from Gr\"{o}nwall's inequality that  $u^{1, 2}(t)\equiv0$ for all $t \in [0, T]$.
The proof of Theorem \ref{thm-4-1} is completed.

\end{proof}

\renewcommand{\theequation}{\thesection.\arabic{equation}}
\setcounter{equation}{0}
\section{The global well-posedness}
We are now in a position to complete the proof of Theorem \ref{thm-main-1}.
\begin{proof}[Proof of Theorem \ref{thm-main-1}:]
Thanks to Theorem \ref{thm-4-1}, we conclude that: under the assumptions in
Theorem \ref{thm-main-1}, system \eqref{EB-multi-1.1} has a unique local solution $u$ satisfying \eqref{EB-multi-4.1}. Assume that $T^{\ast}>0$ is the maximal existence time of this solution, that is
$$u\in\mathcal{C}([0,T^{\ast});H^{2}(\mathbb{R}^{3}))\cap L^{2}([0,T^{\ast});H^{4}(\mathbb{R}^{3})).$$
It suffices to prove $T^{\ast}=+\infty$. We will argue by contradiction argument. Hence, we assume
$T^{\ast}<+\infty$ in what follows.

According to the basic energy estimates \eqref{EB-multi-3.3}, \eqref{EB-multi-3.7} and \eqref{3-17-1}, we get for $\forall t \in [0, T^{\ast})$,
\begin{equation}\label{EB-multi-5.1}
\|\nabla u\|_{L^2}^2\leq C(u_{0}).
\end{equation}
\begin{equation}\label{EB-multi-5.2}
\|u\|_{L^2}^2+\int_{0}^{t}\|\Delta u\|_{L^{2}}^2 d\tau\leq C(T^{\ast},u_{0}).
\end{equation}
\begin{equation}\label{EB-multi-5.3}
\|\Delta u\|_{L^2}^2+\int_{0}^{t}\|\Delta^{2} u\|_{L^{2}}^2 d\tau\leq C(T^{\ast},u_{0}).
\end{equation}
Which follows that
\begin{equation}\label{EB-multi-5.4}
\sup_{\tau\in[0,T^{\ast})}\|u\|_{H^{2}}^{2}\leq C(T^{\ast},u_{0})<+\infty.
\end{equation}
Applying Lemma \ref{lem-4}, we obtain
\begin{equation}\label{EB-multi-5.5}
\sup_{\tau\in[0,T^{\ast})}\|u\|_{L^{\infty}}^{2}\leq C\sup_{\tau\in[0,T^{\ast})}\|u\|_{H^{2}}^{2}\leq C(T^{\ast},u_{0})<+\infty.
\end{equation}
From this, the solution can be extended after $t = T^{\ast}$, which contradicts with the definition
of $T^{\ast}$. Hence, we get $T^{\ast} =+\infty$, and then complete the proof of Theorem \ref{thm-main-1}.

\end{proof}

\renewcommand{\theequation}{\thesection.\arabic{equation}}
\setcounter{equation}{0}
\section{A Polynomial Free Energy Density}
In the first few sections, we have got some results when  $\Phi$ and $ \phi$ satisfies Assumption \ref{aspt-1.1}. In this section, we're going to consider a special case of $\phi(u)= \sum_{i=0}^{4}a_{i}u^{i}$ ($a_{3}>0$ and $a_1<0$, $~i=1,2,3,4$), which has a polynomial free energy density. Evidently, the $\phi(u)$ also satisfies the conditions \eqref{EB-multi-1.4}, and we assume that $\Phi$ satisfies $\Phi(\cdot)\geq 0$, so the above conclusion also holds here. In fact, we can attain a better result in this special case.
\begin{aspt}\label{aspt-6.1}
Let $\phi(\cdot)\in \mathcal{C}(\mathbb{R}^{3})$ such that $\phi=\Phi'$ satisties
\begin{equation}\label{EB-multi-6.1}
\begin{split}
\phi(u)= \sum_{i=0}^{4}a_{i}u^{i},\ a_{3}>0,~a_{1}<0
 \end{split}
\end{equation}
where $a_{i}$($i=1,2,3,4$) are arbitrary constants.

\end{aspt}

In this paper, we intend to establish the global well-posedness of the system \eqref{EB-multi-1.1} with the $\phi$ in Assumption \ref{aspt-6.1}. Our main result is stated as follows.
\begin{thm}\label{thm-6-1}
Under Assumption \ref{aspt-6.1}, let $s>1$, $u_{0}\in H^{s}(\mathbb{R}^{3})$, then the system \eqref{EB-multi-1.1} has a unique global solution $u$ on $[0, +\infty)$ such that
\begin{equation*}
u\in {\mathcal{C}}([0, \infty);H^s(\mathbb{R}^3))\cap L_{loc}^2([0, \infty);H^{s+2}(\mathbb{R}^3)).
\end{equation*}
\end{thm}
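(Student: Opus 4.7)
The strategy follows the pattern of Theorems~\ref{thm-4-1} and~\ref{thm-main-1}, but the explicit polynomial form of $\phi$ together with the sign condition $a_3>0$ lowers the regularity threshold on the initial data from $H^2$ to $H^s$ with $s>1$. The plan is organized in three stages.

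\emph{Stage 1: reinforced energy estimates.} The identity
$$\frac{d}{dt}F(t)=-\int_{\mathbb{R}^3}|\nabla\phi(u)-\nabla\Delta u|^2\,dx\leq 0,$$
together with $\Phi\geq 0$, yields $\|\nabla u(t)\|_{L^2}^2+2\int\Phi(u(t))\,dx\leq F(0)$. Since $\phi$ is a polynomial of degree four with $a_3>0$ and $\Phi\geq 0$, one verifies $\Phi(s)\gtrsim |s|^4-C$ for $|s|$ large, giving the uniform-in-time bounds
$$\|\nabla u\|_{L^\infty([0,\infty);L^2)}+\|u\|_{L^\infty([0,\infty);L^4)}\leq C(u_0).$$
An $L^2$ estimate parallel to the one in Lemma~\ref{lem-3-1} then furnishes $\|u\|_{L^\infty([0,T];L^2)}^2+\|\Delta u\|_{L^2([0,T];L^2)}^2\leq C(T,u_0)$ on every $[0,T]$.

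\emph{Stage 2: $H^s$ a priori estimate.} Applying $\Lambda^s$ to the equation and pairing with $\Lambda^s u$ in $L^2$, after integration by parts, Plancherel, and Cauchy–Schwarz, one obtains
$$\frac{d}{dt}\|u\|_{H^s}^2+\|u\|_{H^{s+2}}^2\leq C\|\phi(u)\|_{H^s}^2.$$
Applying Lemma~\ref{lem-7} to the smooth function $\phi-\phi(0)$ gives $\|\phi(u)\|_{H^s}\leq C(1+\|u\|_{L^\infty})^{\sigma}\|u\|_{H^s}+C(u_0)$ with $\sigma=\lceil s\rceil$. In the delicate regime $s\in(1,3/2]$, where $H^s\not\hookrightarrow L^\infty$, I control $\|u\|_{L^\infty}$ by the Gagliardo–Nirenberg interpolation
$$\|u\|_{L^\infty}\leq C\|u\|_{H^{s+2}}^{\theta}\|u\|_{H^1}^{1-\theta},\qquad \theta\in\Big(\tfrac{1/2}{s+1},\tfrac{1}{\sigma}\Big),$$
which is an admissible nonempty interval precisely because $s>1$. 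Combined with the uniform $H^1$ bound from Stage~1 and Young's inequality, the factor of $\|u\|_{H^{s+2}}^{2\sigma\theta}$ is absorbed into the dissipation, leaving a differential inequality of Grönwall type that yields
$$\|u\|_{L^\infty([0,T];H^s)}^2+\|u\|_{L^2([0,T];H^{s+2})}^2\leq C(T,u_0).$$

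\emph{Stage 3: construction, uniqueness, and continuation.} Local existence is obtained by the Friedrich regularization~(\ref{EB-multi-4.2}); the Stage~2 estimate gives uniform $H^s$ bounds on the approximants $u_n$, and Aubin–Lions (Lemma~\ref{lem-2}) extracts a limit $u\in\mathcal{C}([0,T];H^s)\cap L^2([0,T];H^{s+2})$ solving~\eqref{EB-multi-1.1}. Uniqueness follows from an $L^2$ energy estimate on $u^{1,2}=u^1-u^2$ analogous to the one in Section~4, using that $u^i\in L^2_t H^{s+2}\hookrightarrow L^2_t L^\infty$ since $s+2>5/2>3/2$. Finally, the global-in-time character of the a priori bound from Stage~2 serves as a blow-up criterion, and the standard continuation argument of Section~5 extends the unique solution to $[0,\infty)$.

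The principal obstacle is the low-regularity regime $s\in(1,3/2]$, where Lemma~\ref{lem-7} requires an $L^\infty$ control that is not provided by Sobolev embedding from $H^s$. The remedy is the interpolation above, which uses the parabolic dissipation $H^{s+2}$ as the source of high-derivative control and the energy-level $H^1$ bound as the uniform low-derivative anchor; the condition $s>1$ is exactly what keeps $\sigma\theta<1$ so that the nonlinearity can be absorbed by the dissipation.
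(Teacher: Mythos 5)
Your overall architecture coincides with the paper's: apply $\Lambda^{s}$ to the equation, pair with $\Lambda^{s}u$, bound the nonlinearity through the composition estimate of Lemma~\ref{lem-7}, and close with Gr\"onwall plus the continuation argument of Section~5. The point of divergence is how the factor $(1+\|u\|_{L^{\infty}})^{\sigma}$ in Lemma~\ref{lem-7} is controlled. The paper does not attempt a self-contained low-regularity theory: via Remark~\ref{rmk-6.1} it imports the bounds \eqref{EB-multi-5.4}--\eqref{EB-multi-5.5} from the $H^{2}$ theory of Theorem~\ref{thm-main-1}, so that $\|u\|_{L^{\infty}}\le C(T,u_{0})$ is available a priori, \eqref{EB-multi-6.4} becomes linear in $\|u\|_{H^{s}}$, and \eqref{EB-multi-6.5} is a genuine linear Gr\"onwall inequality. (This presupposes $u_{0}\in H^{2}$, i.e.\ effectively $s\ge 2$; your attempt to cover $1<s<2$ without that input is the genuinely new part of the proposal, and it is exactly there that the argument breaks.)

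The gap is in Stage~2. After $\|u\|_{L^{\infty}}\le C\|u\|_{H^{s+2}}^{\theta}\|u\|_{H^{1}}^{1-\theta}$ and the uniform $H^{1}$ bound, the nonlinear term is controlled by $C\|u\|_{H^{s+2}}^{2\sigma\theta}\|u\|_{H^{s}}^{2}$ with $2\sigma\theta<2$. Young's inequality absorbs the dissipative factor only at the price of raising the remainder to a power: $C\|u\|_{H^{s+2}}^{2\sigma\theta}\|u\|_{H^{s}}^{2}\le \tfrac12\|u\|_{H^{s+2}}^{2}+C\bigl(\|u\|_{H^{s}}^{2}\bigr)^{1/(1-\sigma\theta)}$ with $1/(1-\sigma\theta)>1$. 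The resulting inequality $y'\le Cy^{\alpha}$, $\alpha>1$, is not ``of Gr\"onwall type'': it controls $y$ only up to a time determined by $y(0)$ and may blow up before an arbitrary prescribed $T$, so it does not yield $\|u\|_{L^{\infty}([0,T];H^{s})}\le C(T,u_{0})$ and hence no continuation to $[0,\infty)$. To close it you must extract more from the dissipation, e.g.\ also interpolate $\|u\|_{H^{s}}^{2}\le C\|u\|_{H^{s+2}}^{2\mu}\|u\|_{H^{1}}^{2(1-\mu)}$ with $\mu=(s-1)/(s+1)$ and verify $2\sigma\theta+2\mu<2$ (which forces $\sigma<4$, hence only $s<3$), or, more simply, use your local theory only on a short interval $[0,t_{0}]$, note that $u\in L^{2}((0,t_{0});H^{s+2})$ gives $u(t_{1})\in H^{s+2}\subset H^{2}$ for some $t_{1}\in(0,t_{0})$, and then run Theorem~\ref{thm-main-1} and the paper's propagation argument from $t_{1}$ onward, where the uniform $L^{\infty}$ bound \eqref{EB-multi-5.5} is available. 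Either repair is routine, but neither appears in your text; as written, Stage~2 does not deliver the stated global estimate. A minor additional point: the claim $\|u\|_{L^{\infty}([0,\infty);L^{4})}\le C(u_{0})$ in Stage~1 needs the (only locally-in-time available) $L^{2}$ bound to handle the set $\{|u|\le R\}$ on the unbounded domain, though this is harmless since only bounds on finite intervals are ultimately required.
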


\begin{rmk}\label{rmk-6.1}
According to the Theorem \ref{thm-main-1}, the existence and the uniqueness of the solution for the system \eqref{EB-multi-1.1} with the Assumption \ref{aspt-6.1} have proved. Then,  we just have to prove the solution $u$ also satisfies
\begin{equation*}
u\in {\mathcal{C}}([0, \infty);H^s(\mathbb{R}^3))\cap L^2([0, \infty);H^{s+2}(\mathbb{R}^3))
\end{equation*}
for any $s>1$.
\end{rmk}
\begin{proof}[Proof of Theorem \ref{thm-6-1}:]

As noted in the Remark \ref{rmk-6.1}, the equations \eqref{EB-multi-1.1} exist a unique solution $u$. Then, we only need prove that for any $s>1$,
\begin{equation*}
u\in {\mathcal{C}}([0, \infty);H^s(\mathbb{R}^3))\cap L^2([0, \infty);H^{s+2}(\mathbb{R}^3)).
\end{equation*}
In fact, applying the operator $\Lambda^{s}$ to the equations \eqref{EB-multi-1.1} and then taking the $L^{2}$ inner product with $\Lambda^{s}u$, we obtain
\begin{equation}\label{EB-multi-6.2}
\begin{split}
\frac{1}{2}\frac{d}{dt}\|u\|_{H^{s}}^{2}+\|\Delta u\|_{H^{s}}^{2}=\int_{\mathbb{R}^{3}}\Lambda^{s}\Delta(\sum_{i=0}^{4}a_{i}u^{i})\Lambda^{s}udx.
 \end{split}
\end{equation}
If $i=0$, $\int_{\mathbb{R}^{3}}\Lambda^{s}\Delta(a_{0})\Lambda^{s}udx=0$, so
\begin{equation}\label{EB-multi-6.3}
\begin{split}
\int_{\mathbb{R}^{3}}\Lambda^{s}\Delta(\sum_{i=0}^{4}a_{i}u^{i})\Lambda^{s}udx&=
\int_{\mathbb{R}^{3}}\Lambda^{s}(\sum_{i=1}^{4}a_{i}u^{i})\Delta\Lambda^{s}udx\\
&\leq C\|\sum_{i=1}^{4}a_{i}u^{i}\|_{H^{s}}^{2}+\frac{1}{2}\|\Delta u\|_{H^{s}}^{2}.
 \end{split}
\end{equation}
Thanks to Lemma \ref{lem-7} and \eqref{EB-multi-5.4}, \eqref{EB-multi-5.5}, we have
\begin{equation}\label{EB-multi-6.4}
\begin{split}
\|\sum_{i=1}^{4}a_{i}u^{i}\|_{H^{s}}&\leq C(1+\|u\|_{L^{\infty}})^{\sigma}\|(\sum_{i=1}^{4}a_{i}u^{i})''\|_{W^{\sigma,\infty}}\|u\|_{H^{s}}\\
&\leq C\|u\|_{H^{s}},
 \end{split}
\end{equation}
where $s>1$, and $\sigma>0$ is the smallest integer such that $\sigma>s$.
Inserting \eqref{EB-multi-6.3}, \eqref{EB-multi-6.4} into \eqref{EB-multi-6.2}, we obtain
\begin{equation}\label{EB-multi-6.5}
\begin{split}
\frac{d}{dt}\|u\|_{H^{s}}^{2}+\|\Delta u\|_{H^{s}}^{2}\leq C\|u\|_{H^{s}}^{2},
 \end{split}
\end{equation}
 Owing to the Gr\"{o}nwall inequality, we get
 $$\|u\|_{H^{s}}^{2}\leq C(u_{0}).$$
 Integrating \eqref{EB-multi-6.5} with respect to the time $t$, we get for $\forall t \in [0, T)$ $(T<\infty)$
\begin{equation}\label{EB-multi-6.6}
\begin{split}
\|u\|_{H^{s}}^{2}+\int_{0}^{t}\|\Delta u\|_{H^{s}}^{2}d\tau\leq C(u_{0},T),
 \end{split}
\end{equation}
which implies
$$u\in {\mathcal{C}}([0, T];H^s(\mathbb{R}^3))\cap L^2([0, T];H^{s+2}(\mathbb{R}^3)).$$

Next, repeating the proof of the Theorem \ref{thm-main-1}, we can get  $T^{\ast}=+\infty$ ($T^{\ast}$ is the
maximal existence time of this solution, such that $u\in {\mathcal{C}}([0, T^{\ast});H^s(\mathbb{R}^3))\cap L^2([0, T^{\ast});H^{s+2}(\mathbb{R}^3))$). And then we complete the proof of Theorem \ref{thm-6-1}.

\end{proof}

\vskip 0.2cm

\noindent {\bf Acknowledgments.} The work of Zhenbang Li is supported in part by NSF of China Grant 11801443 and ShaanXi province Department of Education Fund 15JK1347.

\vskip 0.2cm

\end{document}